\documentclass[12pt]{amsart}
\usepackage{amssymb,mathtools,enumitem,microtype}
\setlist[enumerate,1]{labelindent=\parindent,labelsep=0.5\parindent,leftmargin=*,align=left}
\numberwithin{equation}{section}
\newtheorem{thm}{Theorem}[section]
\newtheorem{prop}[thm]{Proposition}
\newtheorem{lem}[thm]{Lemma}

\theoremstyle{definition}
\newtheorem{defn}[thm]{Definition}
\theoremstyle{remark}
\newtheorem{remark}[thm]{Remark}

\newcommand{\Ef}{\mathcal E_f(0)}
\newcommand{\EFq}{\mathcal E_F(q)}
\newcommand{\Tau}{\mathcal T}
\renewcommand{\restriction}{\mathord{\upharpoonright}}

\begin{document}

\title[Absolute Winning for MP Maps]{Absolute Winning Exceptional Sets for Intermittent Interval Maps}
\author{Jason Duvall}
\email{jasonrduvall@gmail.com}
\keywords{Keywords: absolute winning, Schmidt games, Manneville--Pomeau maps, non-dense orbits, neutral fixed points, intermittent map, exceptional sets.}
\subjclass[2020]{Mathematics Subject Classification: 37E05, 37D25, 11K55.}

\begin{abstract}
	We prove that for a Manneville--Pomeau type interval map, the set of points whose orbit closures miss a prescribed countable set is absolute winning in the sense of McMullen. The proof has three parts. First we directly prove that the exceptional set for the distinguished endpoint of the induced first-return map is absolute winning. Then we use the finite-branch winning theorem of Hu--Li--Yu, together with the one-dimensional implication from $1/2$-strong winning to absolute winning, to obtain absolute winning for all countable induced targets. Finally, a quasisymmetric pullback argument transfers these induced results back to the original map.
\end{abstract}

\maketitle

\section{Introduction}

Let $T\colon X\to X$ be a dynamical system on a compact metric space and let $P \subset X$. We write
\begin{equation*}
	\mathcal{E}_T(P) \coloneqq \big\{ x\in X \colon P \cap \overline{\{T^n x \colon n\ge 0\}} = \emptyset\big\}.
\end{equation*}
When $P=\{p\}$ is a singleton we write $\mathcal E_T(p)$ instead of $\mathcal E_T(\{p\})$.

This article proves that, for a class of nonuniformly expanding interval maps with a neutral fixed point at \(0\), the set \(\mathcal E_f(P)\) is absolute winning for every countable \(P\subset[0,1]\). Specifically, we assume that $f \colon [0,1] \to [0,1]$ is a two-branch interval map and that there exists $r_1 \in (0,1)$ such that the following conditions hold (using one-sided derivatives as needed):
\begin{enumerate}[label=(MP\arabic*)]
	\item \label{condfirst} $f((0,r_1)) = f((r_1,1)) = (0,1)$;
	\item \label{cond2} $f$ is $C^1$ on $[0,r_1)$ and $C^2$ on $(0,r_1) \cup (r_1,1]$;
	\item \label{cond3} $f(0) = f(r_1) = 0$, $f'(0) = 1$, and $\lvert f' \rvert > 1$ on $(0,1]$
	      (hence $\inf_{x \in [\epsilon,1]} \, \lvert f' \rvert > 1$ for all $\epsilon > 0$);
	\item \label{cond4} $\sup_{x \in [0,1]} \, \lvert f'(x) \rvert < \infty$ and
	      $\sup_{x \in (r_1,1]} \, \lvert f''(x) \rvert < \infty$;
	\item \label{condlast} there exist $C \geq 1$ and $\gamma > 0$ such that for $x \in (0,r_1)$,
	      \begin{equation*}
		      C^{-1} \leq \frac{f''(x)}{x^{\gamma-1}} \leq C.
	      \end{equation*}
\end{enumerate}

\begin{remark}
	The value assigned to \(f\) at the branch point \(r_1\) is a convention. We take \(f(r_1)=0\) for definiteness. Changing this value affects only the countable set of points whose orbits eventually hit \(r_1\), and does not affect any absolute-winning conclusion below. All branch estimates are made on branch interiors or using one-sided branch extensions.
\end{remark}

Condition \ref{cond3} implies that $f$ is monotone on the uniformly expanding branch $(r_1,1]$. We allow either orientation on this branch. The only difference is which endpoint of the inducing base corresponds to long excursions near the neutral fixed point. Accordingly, define
\begin{equation}\label{eq:q}
	q=
	\begin{cases}
		r_1, & \text{if } f \restriction_{(r_1,1]} \text{ is increasing}, \\
		1,   & \text{if } f \restriction_{(r_1,1]} \text{ is decreasing}.
	\end{cases}
\end{equation}
The standard example from this class is the Manneville--Pomeau map
\begin{equation*}
	f(x)=x+x^{1+\gamma}\;(\bmod\,1).
\end{equation*}

For such $f$ define the first entrance/return time to the base $[r_1,1]$ by
\begin{equation*}
	\tau(x) \coloneqq \inf \{ n\ge 1 \colon f^n(x) \in [r_1,1] \},
\end{equation*}
using the convention that $\inf \emptyset = \infty$. When $x\in[r_1,1]$ this is the first return time. This quantity is finite for every point outside the countable collection of preimages of the branch endpoints. Define the induced first return map $F \colon [r_1,1] \to [r_1,1]$ by
\begin{equation*}
	F(x) \coloneqq f^{\tau(x)}(x)
\end{equation*}
whenever $\tau(x) < \infty$ and arbitrarily assign $F(x) \in \{r_1,1\}$ whenever $x \in [r_1,1]$ never returns to the induced domain. Condition \ref{cond3} implies that $F$ is uniformly expanding.

Our main theorem is the following. See Section \ref{sec:absolute} for the definition of the absolute game.

\begin{thm}\label{thm:main}
	Let $f$ satisfy the assumptions above. For every countable set $P \subset [0,1]$, the exceptional set $\mathcal{E}_f(P)$ is absolute winning in $[0,1]$.
\end{thm}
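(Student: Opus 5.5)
The proof follows the three-part strategy in the abstract, and uses two standard properties of absolute winning sets (McMullen). First, a countable intersection of absolute winning sets is absolute winning. Second, the image of an absolute winning set under a quasisymmetric homeomorphism between intervals is absolute winning. Since $P$ is countable and $\mathcal E_f(P)=\bigcap_{p\in P}\mathcal E_f(p)$, it is enough to treat a single point $p\in[0,1]$. The case $p=0$ needs special care, because orbits that linger near the neutral fixed point correspond, for the induced map, to orbits approaching the endpoint $q$ of the base.

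\textbf{Step 1 (induced map, distinguished endpoint).} I first show directly that $\EFq$ is absolute winning in $[r_1,1]$. The branches of $F$ accumulate only at $q$. The branch intervals $I_n=\{\tau=n\}$ adjacent to $q$ have lengths comparable to $n^{-1-1/\gamma}$, which follows from \ref{condlast} via the standard estimate $x_n\asymp n^{-1/\gamma}$ for the preimages of $r_1$ under the left branch. Bounded distortion of $F$ on each branch follows from \ref{cond4}, \ref{condlast} and Koebe-type estimates. Alice's strategy is to delete, at each scale, the interval around $q$, and around its iterated preimages that become relevant at that scale. Bounded distortion makes it possible to pull this deletion back along the itinerary that Bob's ball determines. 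The key requirement is that at each stage only boundedly many dangerous points have neighborhoods of comparable size, so that a single $\beta$-deletion handles them.

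\textbf{Step 2 (other induced targets).} For a point $y\neq q$, or for finitely many such points, a finite-branch truncation of $F$ is uniformly expanding with bounded distortion. The theorem of Hu--Li--Yu then gives that $\mathcal E_{F}(y)$, restricted to orbits avoiding a small neighborhood of $q$, is $1/2$-strong winning. In dimension one, $1/2$-strong winning implies absolute winning. Orbits that approach $q$ are handled by intersecting with the set from Step 1 and with the sets obtained from finite-branch truncations at deeper levels. Here I use countable intersection together with the inclusion $\bigcup_N(\text{truncated exceptional set})\cap\EFq\subseteq \mathcal E_F(y)$, which has to be arranged as an intersection of winning conditions. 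The cleanest route is to show that $\mathcal E_F(y)\supseteq \EFq\cap \mathcal E_{F_N}(y)$ for every $N$ large enough, since an $F$-orbit whose closure misses $q$ uses only finitely many branches.

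\textbf{Step 3 (transfer to $f$).} A point $x\in[0,1]$ avoids $p$ in its $f$-orbit closure if its first-return itinerary avoids a corresponding countable set $Q_p\subset[r_1,1]$. For $p\neq0$, $Q_p$ consists of $p$'s induced representatives on the excursion intervals; for $p=0$ we take $Q_p=\{q\}$. On each branch interval $J$ of $f^{k}$ mapping onto the base, the inverse branch is a bi-Lipschitz, hence quasisymmetric, map with uniformly controlled constants. The union of these intervals covers $[0,1]$ up to a countable set. Absolute winning is local and is preserved under countable unions of this type, provided countable exceptional sets are removed; a countable set can always be avoided in the absolute game. Hence $\mathcal E_f(p)$ is absolute winning.

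I expect Step 1 to be the main obstacle. The difficulty is making Alice's single-interval deletions control infinitely many branches whose sizes degenerate polynomially near $q$. The plan is to use the explicit polynomial decay of $|I_n|$ to show that, inside any ball of radius $\rho$, the preimages of $q$ of size at least $\beta\rho$ cluster in a set of diameter $\ll\rho$, so that one deletion suffices.
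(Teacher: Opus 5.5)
The gap is Step~1. It carries the whole theorem: the case $p=0$ directly, and every other target through Step~2. You only sketch it. A one-deletion-per-turn strategy needs, at every turn, all neighbourhoods Alice must remove \emph{now} to fit in one interval of length $\beta\lvert B\rvert$. ``Boundedly many'' does not give this, and your clustering statement is not correct as written.

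If ``size'' refers to the cylinder $J$ whose $q$-endpoint $e_q(J)$ is at stake, the statement is false. In the increasing case every $e_q(J)$ is the left endpoint of $J$. So $B=J_2=[p_2,p_1]$ contains $e_q(J_2)=p_2$ and $e_q(J_1)=p_1$ at its two ends, and the points $e_q(J_{2a})$ with $a$ bounded lie in between. All of these belong to cylinders of length comparable to $\lvert B\rvert$. More generally, an interval almost covering a cylinder can meet its tail at one end and a neighbouring tail at the other; this is exactly the obstruction the paper flags.

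If ``size'' refers to the neighbourhood itself, then neighbourhoods longer than $\beta\lvert B\rvert$ cannot be removed by any legal move. So the statement has to concern a window of scales, and neighbourhoods \emph{meeting} $B$ rather than centred in it.

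The paper avoids the issue by bookkeeping by generation instead of by scale. It stops when Bob's interval becomes commensurate with a new generation. It $2$-colours each $G_g$ so that at most one $\varepsilon$-coloured tail is active. It handles tails of generations skipped in a single move through supported cylinders (Lemmas~\ref{lem:local-support-criterion}--\ref{lem:supported-cwg}). It then concludes from $W_0(q)\cap W_1(q)\subset\EFq\cup\mathcal C_F$.

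Your scale-based idea can probably be repaired, and would then give a genuinely different, colouring-free proof. Choose $\theta$ after Bob's first move with $\theta(1-r_1)\le\beta\lvert B_1\rvert/2$. Call $J$ due at turn $n$ when $\theta\lvert J\rvert\in(\beta^3\lvert B_n\rvert,\beta\lvert B_n\rvert/2]$. Every cylinder is then due at some turn, because $\lvert B_{n+1}\rvert\ge\beta\lvert B_n\rvert$ and $\lvert B_n\rvert\to0$. Let Alice delete the due tails meeting $B_n$. What must be proved is that, for $\theta$ small in terms of $\beta$, $C_1$, $\gamma$, the due tails meeting $B_n$ are at most two and abut at a single point; Lemma~\ref{lem:tail-to-exceptional} then finishes. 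That lemma is the missing content. It requires a case analysis (same generation, nested, non-nested of different generations), using \eqref{eq:tail} and \eqref{eq:child} in every generation together with the exact position of $e_q$.

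One configuration needs particular care. In the decreasing case $e_q(J_{\eta1})=e_{\mathrm{opp}}(J_\eta)$; already $e_q(J_1)=r_1=e_{\mathrm{opp}}(J_\varnothing)$. So a generation-$(g+1)$ tail can abut a generation-$g$ tail at the same point. This configuration is also missed by the identity $e_{\mathrm{opp}}(J_{\eta1})=e_{\mathrm{opp}}(J_\eta)$ used in the proof of Lemma~\ref{lem:support-or-size}, so do not import that lemma unchanged.

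Steps~2 and~3 are otherwise essentially the paper's, with two corrections.
\begin{itemize}
\item The usable inclusion is $\bigl(\EFq\cap\bigcap_{N\ge2}\mathcal E_{F_N}(y)\bigr)\setminus\mathcal C_F\subset\mathcal E_F(y)$. Neither a single $N$ nor $\bigcup_N$ works, since the $N$ for which $F$ and $F_N$ agree along an orbit depends on the point.
\item Your backward representatives $Q_p$ accumulate at $q$, so $q$ must be added to $Q_p$, because $f$-orbits can approach $p$ through arbitrarily long excursions. The paper instead pushes $p$ forward to $f^{\tau(p)}(p)$ and treats $\mathcal C_f$ separately.
\end{itemize}
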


We first prove directly that $\mathcal E_F(q)$ is absolute winning, where $q$ is the endpoint in \eqref{eq:q}. This is the only part of the paper where the unbounded geometry of the countably branched induced map is handled directly. We then approximate $F$ by finite-branch expanding maps $F_K$ that agree with $F$ on the first $K-1$ branches. Hu--Li--Yu proved that for finite-branch piecewise $C^{1+\delta}$ expanding interval maps, every target-avoidance set is $1/2$-strong winning. On intervals, $1/2$-strong winning implies absolute winning; hence their theorem supplies absolute winning for the finite-branch approximants. Combining these approximants with the already-proved absolute winning of $\mathcal E_F(q)$ yields the following induced countable-target theorem.

\begin{thm}\label{thm:secondary}
	Let $f$ satisfy the assumptions above. For every countable set $Q \subset [r_1,1]$, the exceptional set $\mathcal{E}_F(Q)$ is absolute winning in $[r_1,1]$.
\end{thm}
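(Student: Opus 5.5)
The proof rests on two standard facts about the absolute game on an interval. First, a countable intersection of absolute winning sets is absolute winning. Second, absolute winning is preserved under bi-Lipschitz, and more generally quasisymmetric, changes of coordinates on subintervals. Since $Q$ is countable, $\mathcal E_F(Q)=\bigcap_{y\in Q}\mathcal E_F(y)$. It therefore suffices to show that $\mathcal E_F(y)$ is absolute winning for each single point $y\in[r_1,1]$. We take as given the result announced in the introduction, namely that $\mathcal E_F(q)$ is absolute winning; this is proved directly before this theorem. We also take the finite-branch theorem of Hu--Li--Yu, combined with the implication from $1/2$-strong winning to absolute winning in dimension one.

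Fix $y\neq q$ and set $\delta=\lvert y-q\rvert/2>0$. Label the branches of $F$ by their return times. Branches with large return time correspond to long excursions near the neutral point, and they accumulate at $q$: as $\tau\to\infty$ their domains shrink to $q$ and their images are all of $[r_1,1]$. Choose $K$ so large that the union $U_K$ of all branch domains with $\tau\ge K$ lies in a small one-sided neighbourhood of $q$. Let $F_K$ be a finite-branch piecewise $C^{1+\delta}$ uniformly expanding map that agrees with $F$ on the branches $\tau<K$ and on $U_K$ is a single expanding branch onto $[r_1,1]$. The distortion bounds from \ref{cond2}--\ref{condlast} and \ref{cond4} give the required $C^{1+\delta}$ regularity on the first $K-1$ branches. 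By Hu--Li--Yu, $\mathcal E_{F_K}(\{y,q\})$ is absolute winning, so $\mathcal E_{F_K}(y)\cap\mathcal E_F(q)$ is absolute winning as well.

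The main claim is the inclusion $\mathcal E_{F_K}(y)\cap \mathcal E_F(q)\subset \mathcal E_F(y)$ for $K$ large. Take $x$ in the left-hand side. Since $q\notin\overline{\{F^nx\}}$, there is $\eta>0$ such that the $F$-orbit of $x$ never enters the $\eta$-neighbourhood of $q$. Choose $K$, depending on $\eta$, so that $U_K$ lies inside that neighbourhood. Then the $F$-orbit of $x$ never visits $U_K$, so $F^nx=F_K^nx$ for all $n$. Hence $y\notin\overline{\{F^nx\}}$.

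The difficulty is that $K$ depends on $x$, through $\eta$, so we cannot fix a single $F_K$ in advance. To handle this, write
\begin{equation*}
\mathcal E_F(q)=\bigcup_{m}\big\{x: \operatorname{dist}(F^nx,q)\ge 1/m\ \forall n\big\}
\end{equation*}
and use that $\mathcal E_F(q)$ is absolute winning. The intended argument is the following. Alice plays the strategy for $\mathcal E_F(q)$, and every winning play gives an outcome with some positive $\eta$; but a union of absolute winning sets indexed by $m$ is not automatically winning. I expect this to be the main obstacle. My first attempt is to show, by inspecting the proof for $\mathcal E_F(q)$ rather than using it as a black box, that its winning strategy with parameter $\beta$ forces $\operatorname{dist}(F^nx,q)\ge \eta(\beta)$ uniformly over all plays. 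This is plausible because such strategies typically block a fixed-scale neighbourhood of $q$ in rescaled coordinates.

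Granting that uniformity, the argument concludes as follows. Fix $\beta$, take $K=K(\eta(\beta))$, and combine the two strategies. One is the uniform strategy for $\mathcal E_F(q)$; the other is the Hu--Li--Yu strategy for $\mathcal E_{F_K}(y)$, obtained via $1/2$-strong winning. Alice can play both by the standard interleaving trick for intersections in the absolute game: she alternates moves and shrinks the parameter, so that winning holds for all small $\beta$. The outcome lies in $\mathcal E_F(y)$. This holds for every $y\in Q$, and the countable intersection then gives the theorem. The case $y=q$ is the assumed result.
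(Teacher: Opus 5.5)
\textbf{There is a genuine gap.} Your reduction to single targets is correct. So is your pointwise observation that if the $F$-orbit of $x$ avoids $U_K$, then it coincides with the $F_K$-orbit. But the step you identify as the main obstacle is where the proposal breaks, and the uniformity you propose to grant is false.

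No strategy for $\mathcal E_F(q)$ can force $\operatorname{dist}(F^n x,q)\ge\eta(\beta)$ uniformly over all plays. Since $q$ is an endpoint of $[r_1,1]$, Bob may open with $B_1$ inside the $\eta(\beta)/2$-neighbourhood of $q$. Then every outcome already violates the bound at time $0$. Likewise, Bob may open near a point of $F^{-n}(q)$ for any $n$, which breaks the bound at time $n$. In the paper's strategy, the tail parameter $\theta$ and the starting generation $g_1-1$ both depend on Bob's opening interval, so no play-independent $\eta(\beta)$ comes out of it. Your combined strategy, and the choice of a single $K=K(\eta(\beta))$, therefore rest on an incorrect claim.

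\textbf{The missing idea is that you never need to fix $K$.} $K$ ranges over a countable set, and absolute winning is stable under countable intersections. Hence
\[
S\coloneqq \mathcal E_F(q)\cap\bigcap_{K\ge 2}\mathcal E_{F_K}(y)
\]
is absolute winning. So is $S\setminus\mathcal C_F$; removing $\mathcal C_F$ makes the agreement of $F$ and $F_K$ along orbits unambiguous at branch endpoints.

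Now take $x\in S\setminus\mathcal C_F$. Choose $K=K(x)$ from the radius by which the $F$-orbit of $x$ avoids $q$. Since $x\in\mathcal E_{F_K}(y)$ for every $K$, in particular for this one, and the two orbits coincide, you get $x\in\mathcal E_F(y)$. Thus $S\setminus\mathcal C_F\subset\mathcal E_F(y)$, with no inspection of strategies and no interleaving. This is exactly the paper's argument in Theorem~\ref{thm:all-single-targets-F}; the countable intersection over $y\in Q$ then finishes, as you said.

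Put differently: decomposing $\mathcal E_F(q)$ as a union over $m$ does not preserve winning, which is why you got stuck. Intersecting over $K$ does preserve winning, and it absorbs the dependence of $K$ on $x$ for free.
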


Finally, quasisymmetric invariance of absolute winning transfers these induced results back to the original map $f$.

\section{The absolute game}\label{sec:absolute}

We recall McMullen's absolute game \cite{McMullen2010} specialized to the present setting. Let $I\subset\mathbb R$ be a nontrivial compact interval and let $S\subset I$. Fix $0<\beta<1/3$. In the $\beta$-absolute game, Bob chooses a nontrivial closed interval $B_1\subset I$. After Bob chooses $B_n$, Alice deletes a closed interval $A_n\subset B_n$ satisfying
\begin{equation*}
	\lvert A_n\rvert\le \beta \lvert B_n\rvert.
\end{equation*}
Bob then chooses a closed interval satisfying
\begin{equation*}
	B_{n+1}\subset B_n\setminus A_n,
	\qquad
	\lvert B_{n+1}\rvert\ge \beta \lvert B_n\rvert.
\end{equation*}
Alice wins if $\bigcap B_n$ meets $S$. If Alice is able to win regardless of Bob's choices, $S$ is called $\beta$-absolute winning; $S$ is called absolute winning if it is $\beta$-absolute winning for every $0<\beta<1/3$.

We will repeatedly use the following standard facts due to McMullen \cite{McMullen2010} without further mention.

\begin{thm}[McMullen]\label{thm:winningproperties}
	Absolute winning subsets of an interval are dense, uncountable, and have full Hausdorff dimension. The absolute winning property is stable under countable intersections and quasisymmetric homeomorphisms. Supersets and cocountable subsets of absolute winning sets are again absolute winning. Consequently, if $S,C \subseteq I$ and $C$ is countable, then $S\cup C$ is absolute winning in $I$ if and only if $S$ is absolute winning in $I$.
\end{thm}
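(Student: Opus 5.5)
The plan is to prove each assertion of Theorem~\ref{thm:winningproperties} directly from the definition of the $\beta$-absolute game. The tools are two monotonicity facts about the game, a few explicit strategies for Alice, and, for the dimension claim, a Cantor-set construction run by Bob.

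The first thing to establish is the monotonicity used throughout. Suppose $0<\beta'\le\beta<1/3$ and Alice has a winning strategy in the $\beta'$-game. Then she may use that strategy against any Bob whose successive choices $B_{n_j}$ along a subsequence satisfy $B_{n_{j+1}}\subset B_{n_j}\setminus A_{n_j}$ and $\lvert B_{n_{j+1}}\rvert\ge\beta'\lvert B_{n_j}\rvert$. At the intermediate stages Alice \emph{passes}, i.e.\ deletes a degenerate (single-point) or arbitrarily short interval. This is legal because any deletion of length at most $\beta'\lvert B\rvert\le\beta\lvert B\rvert$ is allowed.

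With this in hand, the set-theoretic facts are short.
\begin{enumerate}
\item \emph{Supersets.} This is immediate, since a win for $S$ is a win for any $S'\supseteq S$.
\item \emph{Countable intersections.} Let $S=\bigcap_i S_i$ with each $S_i$ absolute winning. Partition $\mathbb N$ into infinite sets $N_i$, for instance by $2$-adic valuation. For a fixed $\beta$, let Alice respond at the stages of $N_i$ using a strategy for $S_i$, and pass at all other stages. Between consecutive stages of $N_i$, Bob's composite move lies in the complement of Alice's last deletion. If the gap between those stages is $m$, the composite move has ratio at least $\beta^{m}$. Gaps within $N_i$ are unbounded, so the target game for $S_i$ must tolerate varying ratios. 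To handle this, use the variant of the game in which Bob's ratio at step $j$ is at least $\beta_j\to0$, and check that McMullen's proof (or a rescheduling with bounded gaps, e.g.\ $N_i=\{n: n\equiv 2^{i-1}\bmod 2^i\}$, whose gaps are exactly $2^i$) gives a win. With the second choice the gap is constant, so the $\beta^{2^i}$-absolute strategy for $S_i$ applies directly. Then $\bigcap B_n$ lies in every $S_i$.
\item \emph{Single points.} For a point $x$, the set $I\setminus\{x\}$ is absolute winning. Alice deletes an interval of length $\beta\lvert B_n\rvert$ containing $x$ whenever $x\in B_n$. Then $x\notin B_{n+1}$, while $\lvert B_n\rvert\to0$ because $\lvert B_{n+1}\rvert\le(1-\beta)\lvert B_n\rvert$.
\item \emph{Cocountable subsets.} Combining the two previous items, if $C$ is countable then $S\setminus C=S\cap\bigcap_{c\in C}(I\setminus\{c\})$ is absolute winning.
\item \emph{The ``consequently'' clause.} If $S\cup C$ is absolute winning, then $(S\cup C)\setminus C\subseteq S$ is absolute winning by the cocountable item, and the superset item finishes. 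The converse is the superset property.
\end{enumerate}

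For the size properties, density is immediate: Bob may open inside any given subinterval, and Alice's win produces a point of $S$ there. For full dimension, fix an interval $J$ and a large integer $k$, and let Bob play a branching strategy. Bob splits $B_n$ into $k$ equal closed subintervals. Alice's deletion of length at most $\beta\lvert B_n\rvert$ meets at most $\lceil\beta k\rceil+1$ of them, so at least $k-\beta k-2$ of them remain available to Bob. Taking $\beta$ fixed and $k\approx 1/\beta$ gives a legal move of ratio $1/k\ge\beta$ up to constants. Every branch of this tree is a play against which Alice's strategy wins, and the set of outcomes is a Cantor set inside $S\cap J$. A mass-distribution estimate bounds its dimension below by $\log(k(1-\beta)-2)/\log k$, which tends to $1$ as $\beta\to0$. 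Because $S$ is $\beta$-winning for every $\beta$, it follows that $\dim_H S=1$, and uncountability is a consequence.

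The main obstacle is quasisymmetric invariance. Let $h\colon I\to I'$ be $\eta$-quasisymmetric and let $S$ be absolute winning. To show $h(S)$ is $\beta$-absolute winning, Alice will simulate a $\beta'$-game on the $h^{-1}$ side, with $\beta'$ chosen from $\eta$ and $\beta$. Each Bob interval $B'$ pulls back to $h^{-1}(B')$. By quasisymmetry, a subinterval of $B'$ of relative length at least $\beta$ pulls back to one of relative length at least $\beta'=\beta'(\eta,\beta)$. Likewise, an interval of relative length at most $\beta'$ in $h^{-1}(B')$ pushes forward to one of relative length at most $\eta$-controlled, which is $\le\beta$ once $\beta'$ is small enough. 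The delicate bookkeeping is the symmetric two-sided distortion estimate. The standard comparison is $\lvert h(J)\rvert/\lvert h(K)\rvert\le\eta(\lvert J\rvert/\lvert K\rvert)$-type bounds for nested $J\subset K$, together with $\eta(t)\to0$ as $t\to0$. I will choose $\beta'$ so that both inequalities hold, after which Alice's pulled-back strategy wins.
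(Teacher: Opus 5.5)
The paper gives no proof of this statement; it cites McMullen, so there is no in-paper argument to compare with. Your plan is the standard one-dimensional argument: constant-gap scheduling for intersections, singletons plus intersections for cocountable sets, a Bob-branching Cantor set for dimension, and pulling intervals back for quasisymmetry. The density, superset, singleton, dimension and quasisymmetry parts are sound in outline.

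There is, however, a genuine gap in the countable-intersection step. The cocountable and ``consequently'' clauses rest on it, so they inherit the gap. Under the winning condition used here, a win in the simulated game for $S_i$ only guarantees $\bigcap_n B_n\cap S_i\neq\emptyset$. Your sentence ``then $\bigcap B_n$ lies in every $S_i$'' therefore needs $\bigcap_n B_n$ to be a single point, and nothing in your combined strategy ensures this. The component strategies are arbitrary winning strategies and may prescribe very short or degenerate deletions; you even advertise passing. So $\bigcap_n B_n$ may be a nondegenerate interval that meets each $S_i$ at a different point while missing $\bigcap_i S_i$. You cannot close this by appealing to density of $\bigcap_i S_i$, since that is part of what is being proved.

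The fix is one line. At each stage, replace the prescribed deletion $A_n$ by any interval $\tilde A_n$ with $A_n\subset\tilde A_n\subset B_n$ and $\lvert\tilde A_n\rvert=\beta\lvert B_n\rvert$. This is legal in the real game. Every simulated play stays legal, because $B_{n+1}\subset B_n\setminus\tilde A_n\subset B_n\setminus A_n$ only restricts Bob. Now $\lvert B_{n+1}\rvert\le(1-\beta)\lvert B_n\rvert$, so $\bigcap_n B_n=\{\omega\}$ and $\omega\in S_i$ for every $i$. This is the same maximal-deletion device the paper uses in its own game arguments.

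Some smaller points to clean up:
\begin{enumerate}
\item The $2$-adic classes $\{n\colon v_2(n)=i-1\}$ are exactly $\{n\equiv 2^{i-1}\bmod 2^i\}$, so they already have constant gap $2^i$. Your remark that their gaps are unbounded is wrong, and the variable-ratio detour is unnecessary: a $\beta^{2^i}$-absolute strategy for $S_i$ applies directly.
\item In the dimension argument, Bob's ratio must be at least $\beta$ exactly, not up to constants, so take $k=\lfloor 1/\beta\rfloor$. A closed deletion of length at most $\lvert B_n\rvert/k$ can touch three of the closed grid pieces through shared endpoints. Still, at least $k-3$ children remain, and $\log(k-3)/\log k\to1$ gives full dimension.
\item For quasisymmetry, say explicitly that both of your requirements are upper bounds on $\beta'$, hence compatible. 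For example, the standard estimate $\lvert h(A)\rvert/\lvert h(B)\rvert\le\eta(2\lvert A\rvert/\lvert B\rvert)$ for intervals $A\subset B$ shows that any $\beta'<1/3$ with $\beta'\le\eta^{-1}(\beta)/2$ handles both Alice's pushed-forward deletions and Bob's pulled-back ratios. This is where winning at every parameter, not just a single $\beta$, is used.
\end{enumerate}
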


In addition, Badziahin, Harrap, Nesharim, and Simmons proved the following relationship between the various notions of winning in one dimension \cite{BHNS}.

\begin{thm}[Badziahin, Harrap, Nesharim, and Simmons]
	A subset of $\mathbb{R}$ that is $1/2$-strong winning is absolute winning.
\end{thm}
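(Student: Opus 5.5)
The plan is to have Alice, in the $\beta$-absolute game, run a fixed winning strategy $\sigma$ for the $(1/2,\beta')$-strong game in the background, with $\beta'$ small depending on $\beta$. Her deletions are chosen so that Bob's absolute-game intervals can be reinterpreted, up to bounded distortion, as legal moves of a strong-game Bob. Since every limit point of the absolute game is then a limit point of some $\sigma$-consistent strong-game play, Alice wins. Recall the strong game: Bob plays $B_n$, Alice plays $A_n\subset B_n$ with $\lvert A_n\rvert\ge \frac12\lvert B_n\rvert$, Bob plays $B_{n+1}\subset A_n$ with $\lvert B_{n+1}\rvert\ge\beta'\lvert A_n\rvert$. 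The number $1/2$ matters because in one dimension a subinterval of half the length of $B$ is determined up to its position by which points it excludes. Its complement in $B$ has at most two components, of total length $\frac12\lvert B\rvert$, and at most two boundary points lie in the interior of $B$.

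The key steps, in order, are as follows.

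First, reduce to the case where $\sigma$ always plays intervals of length exactly $\frac12\lvert B_n\rvert$. One can always shrink Alice's move, at the cost of replacing $\beta'$ by $\beta'/2$ for Bob's response.

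Second, given the current simulated strong-game ball $B$ and $\sigma$'s response $A'\subset B$, Alice spends a bounded number $k=k(\beta)$ of absolute-game rounds. In each round she deletes an interval of length $\beta\lvert B_{\mathrm{cur}}\rvert$ centred at an interior endpoint of $A'$ lying in Bob's current interval. After $k$ rounds Bob's interval, of length at least $\beta^{k}\lvert B\rvert$, lies entirely on one side of each endpoint of $A'$, with a definite gap.

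Third, if it lies inside $A'$, declare it the strong-game Bob's next move. This is legal once $\beta'\le\beta^{k}$.

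Fourth, if it lies in $B\setminus A'$, use the symmetry available at $\alpha=1/2$. Choose a new simulated strong-game ball $\tilde B\subset B$ so that $\lvert\tilde B\rvert$ is comparable to $\lvert B\rvert$ and Bob's current interval sits inside $\sigma$'s response to $\tilde B$. One way to arrange this is to modify the earlier simulated move of strong Bob, which is legal because strong Bob's moves only need to lie in the previous $A$ and have length at least $\beta'$ times it. Iterating this backward-repair for at most a bounded number of levels keeps the simulation consistent.

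Finally, the intersection point lies in $\bigcap_n B_n$ of a $\sigma$-play, hence in $S$.

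I expect the fourth step to be the main obstacle. The absolute-game Bob can escape $\sigma$'s chosen half by going to the complement, and Alice cannot prevent this with a single small hole. My first attempt will be to exploit that a $1/2$-strong strategy must in particular respond to every ball. One can then run $\sigma$ simultaneously on a finite family of candidate balls: translates and dilates of $B$ by factors in $[\beta',1]$, chosen so that every subinterval of $B$ of length at least $\beta^{k}\lvert B\rvert$ separated from the relevant endpoints is contained in $\sigma$'s answer for some candidate. Alice then deletes around all the finitely many relevant endpoints in turn, spending more rounds. The requirement that $\beta<1/3$ guarantees that Bob always retains an interval of length at least $\beta$ times the previous one, so the finite number of rounds only costs a factor $\beta^{O(1)}$. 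This is absorbed by taking $\beta'$ small. If this covering argument fails, the fallback is to pass through the BHNS characterization via their intermediate "$H$-absolute" or potential game in dimension one, where $1/2$-strong winning translates directly.
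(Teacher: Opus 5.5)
\textbf{There is a genuine gap: Step 4 is unproved, and as stated it fails.} The paper gives no proof of this statement; it imports it from Badziahin--Harrap--Nesharim--Simmons. So your argument has to stand on its own, and it stops exactly where the content lies.

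Because $\sigma$'s answer $A'$ has only half the length of the simulated ball $B$, an adversarial Bob can usually escape into $B\setminus A'$ even after your Step 2 deletions. For instance, when $A'$ is an end half of $B$, the free part of $B\setminus A'$ has length at least $(\tfrac12-\tfrac\beta2)\lvert B\rvert>\beta\lvert B\rvert$. So Step 4 is the generic case, and there you only assert that some legal $\tilde B$ has a $\sigma$-answer containing Bob's interval. This is false in general:
\begin{itemize}
\item Let $\sigma$ answer every ball by its closed half farther from a fixed point $z$ (the left half if $z$ is the midpoint). This is a genuine $(\tfrac12,\beta')$-strong winning strategy for $\mathbb R\setminus\{z\}$, and no answer ever contains $z$ in its interior.
\item With $B=[0,1]$, $z=\tfrac18$ and $A'=[\tfrac12,1]$, Bob can answer your hole around $\tfrac12$ with $[0,\tfrac13]$, which has $z$ in its interior.
\item From then on, no modification of earlier simulated moves, at any number of levels, puts his interval inside a $\sigma$-answer.
\end{itemize}
The covering variant in your last paragraph is also only asserted. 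For this $\sigma$, the union of any finitely many candidate answers misses an interval $(z,z+\eta)$, and Bob's final interval must avoid it. Deleting small holes around ``all relevant endpoints in turn'', in no specified order, lets Bob shrink toward $z$ until that gap is long compared with his interval. The fallback through the BHNS characterization is circular, since that is the statement being proved.

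\textbf{The missing idea.} Query $\sigma$ \emph{before} Bob moves, on balls centred at points of his current interval $I=[\ell,r]$ and larger than it. This uses the one property that really singles out $\alpha=\tfrac12$: any subinterval of $\tilde B$ of length at least $\tfrac12\lvert\tilde B\rvert$ contains the midpoint of $\tilde B$.
\begin{itemize}
\item If $\tilde B_c$ has centre $c\in I$ and length $3\lvert I\rvert$, its answer $A_c$ contains $c$ and is longer than $I$. Hence $A_c\cap I$ is either all of $I$, or $[\ell,e]$ with $e\ge c$, or $[e,r]$ with $e\le c$.
\item Take a grid $\ell=c_0<\dots<c_N=r$ of mesh less than $\beta\lvert I\rvert$. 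If no answer contains $I$, then $c_0$ is of the second type and $c_N$ of the third. So some consecutive pair switches, giving $A_{c_j}\supset[\ell,c_j]$ and $A_{c_{j+1}}\supset[c_{j+1},r]$.
\item A single deletion of length $\beta\lvert I\rvert$ covering $[c_j,c_{j+1}]$ then forces Bob into $A_{c_j}$ or $A_{c_{j+1}}$. The corresponding $\tilde B_c$ becomes strong Bob's actual move, and no backward repair is needed.
\item For the $\tilde B_c$ to be legal, Bob's interval must stay at distance at least $2\lvert I\rvert$ from the endpoints of the current simulated answer, and be comparable to it in length. A bounded number of one-sided deletions arranges the first condition, and this fixes $\beta'$ as a power of $\beta$.
\end{itemize}
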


We will also use the following finite-branch theorem of Hu--Li--Yu \cite{HLY}.

\begin{thm}[Hu--Li--Yu]\label{thm:HLY}
	Let $T\colon X\to X$ be a piecewise $C^{1+\delta}$ expanding interval map with finitely many branches on a compact interval $X$. Then, for every $\xi\in X$, the exceptional set
	\begin{equation*}
		\mathcal E_T(\xi)=\big\{ x\in X\colon\xi\notin\overline{\{T^n x \colon n\ge0\}}\big\}
	\end{equation*}
	is $1/2$-strong winning, hence absolute winning, in $X$.
\end{thm}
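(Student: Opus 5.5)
The "statement immediately above" is Theorem \ref{thm:HLY} of Hu--Li--Yu. The paper imports it as an external result, so the proof amounts to invoking \cite{HLY} and then \cite{BHNS}. I still sketch how I would prove it directly, by the standard Schmidt-game argument for finitely branched expanding maps.

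\emph{Reduction.} By the theorem of Badziahin--Harrap--Nesharim--Simmons, it suffices to show that $\mathcal E_T(\xi)$ is $1/2$-strong winning. Points whose orbits hit the finitely many branch endpoints form a countable set, so by Theorem \ref{thm:winningproperties} I may ignore them. The basic tool is bounded distortion. Since $T$ has finitely many $C^{1+\delta}$ branches with $\lvert T'\rvert\ge\lambda>1$, there is a constant $D\ge1$ such that every cylinder $J$ of any generation $n$ satisfies
\[
D^{-1}\le \frac{\lvert (T^n)'(x)\rvert}{\lvert (T^n)'(y)\rvert}\le D \qquad (x,y\in J).
\]
Hence $T^n$ maps intervals inside a cylinder to intervals with comparable relative geometry. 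Moreover, full cylinders have lengths comparable to those of their parents, up to a fixed factor coming from the finitely many branches.

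\emph{Strategy.} Alice plays in stages. Given Bob's ball $B$, let $n$ be maximal such that $B$ lies essentially within a single $n$-cylinder $J$ (if needed, handle the boundary by first moving away from the at most one endpoint). Then push forward: $T^n(B)$ is an interval of size comparable to $1$, by bounded distortion and maximality of $n$. Alice's goal at this stage is to keep her future balls a definite distance $\rho$ from the finitely many preimages of $\xi$ and the endpoints that are visible at scale $\sim1$ in $T^n(B)$. In the $1/2$-strong game she can move away from a single point by a definite fraction: she picks the half of the ball away from the target. After pulling back, this guarantees that $T^n x$ stays a distance $\gtrsim \rho$ from $\xi$ for the eventual limit point $x$. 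Because the scale shrinks geometrically, each iterate $n$ is "covered" at some stage. The key point is that between consecutive stages the iterate index advances by a bounded amount. So all iterates $T^kx$, not only those at stage times, are kept at distance $\ge \rho'>0$ from $\xi$. This uses that $T^j$ for bounded $j$ is uniformly continuous away from discontinuities. This yields $\xi\notin\overline{\{T^kx\}}$.

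\emph{Main obstacle.} The hard step is the bookkeeping near branch discontinuities and near $\xi$ itself when $\xi$ is periodic or is an endpoint image. There the push-forward $T^n(B)$ may straddle a discontinuity, and the one-point avoidance must be done simultaneously for several preimages at the same scale. I would handle this as follows. Choose $\rho$ small relative to the minimal branch length, so that at each scale only boundedly many bad points are present. Then spend a bounded number of moves per stage avoiding each of them. Strong winning allows this because Alice's radius ratio $1/2$ lets her halve away from one point per move.
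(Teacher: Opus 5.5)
Your first paragraph matches the paper's treatment. The paper gives no proof of Theorem~\ref{thm:HLY}: it quotes $1/2$-strong winning from \cite{HLY} and the upgrade to absolute winning from \cite{BHNS}, exactly as you do. As a justification of the statement within the paper, that part is correct.

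The direct sketch you add, however, does not prove the theorem in the generality stated. The hypothesis allows any finite-branch piecewise $C^{1+\delta}$ expanding map, with no Markov or full-branch condition. Your central step needs more than that. You claim that maximality of $n$ and bounded distortion make $T^n(B)$ an interval of size comparable to $1$. This requires $\lvert T^n(J)\rvert$ to be bounded below and children to be comparable to their parents, which you yourself assert only for full cylinders. For a non-Markov map, such as a $\beta$-transformation in which the orbit of $1$ is dense, both fail. The image $T^n(J)$ of an $n$-cylinder can be arbitrarily short, and a child can be an arbitrarily small fraction of its parent. A cylinder can also fail to split for an unbounded number of generations while its images stay tiny and possibly pass near $\xi$. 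So there is no uniform scale at which to do your one-point avoidance. The claim that consecutive stages advance the iterate index by a bounded amount also fails. Handling this non-Markov bookkeeping is where the real difficulty of the general statement lies. At best your outline covers full-branch maps; that happens to be all the paper uses, since every $F_K$ has full branches.

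There are two smaller errors in the sketch.
\begin{enumerate}
\item With $\alpha=1/2$ exactly, every admissible Alice interval contains the centre of Bob's interval. So \emph{taking the half away from the target} gives no definite margin in one move when the target is central. It takes two moves, and the bad intervals must be small relative to $\beta$ times the current scale.
\item Theorem~\ref{thm:winningproperties} concerns absolute winning, so it cannot be used to discard a countable set inside the $1/2$-strong game. For that you need instead that co-countable sets are $1/2$-strong winning, and that $1/2$-strong winning sets are closed under countable intersections.
\end{enumerate}
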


Finally, we record the following elementary locality principle to pass from absolute winning on the components of the complement of a closed countable set to absolute winning on the whole interval.

\begin{lem}[A locality property]\label{lem:locality}
	Let $S\subset[0,1]$. Suppose $C\subset[0,1]$ is closed and countable and every component $U$ of $[0,1]\setminus C$ has the following property: for every nontrivial compact interval $I\subset U$, the set $S\cap I$ is absolute winning in $I$. Then $S$ is absolute winning in $[0,1]$.
\end{lem}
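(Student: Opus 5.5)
The plan is to build a strategy for Alice in the $\beta$-absolute game on $[0,1]$, for an arbitrary fixed $0<\beta<1/3$, in two phases. In the first phase she uses her deletions to push Bob's intervals off the countable set $C$. Once Bob's current interval misses $C$ entirely, she switches to a winning strategy inside a single component of $[0,1]\setminus C$, which the hypothesis supplies.

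\emph{Phase one.} Enumerate $C=\{c_1,c_2,\dots\}$; if $C$ is finite, repeat a point, and if $C$ is empty skip directly to phase two. At Bob's move $B_n$, Alice first checks whether $B_n\cap C=\emptyset$. If so, she enters phase two. Otherwise she deletes a closed interval $A_n\subset B_n$ with $\lvert A_n\rvert=\beta\lvert B_n\rvert$, chosen to contain $c_n$ whenever $c_n\in B_n$ (otherwise she places it arbitrarily, say centered). This forces $c_n\notin B_{n+1}$. It also forces the lengths to shrink: $B_{n+1}$ lies in one of the two components of $B_n\setminus A_n$, each of length at most $(1-\beta)\lvert B_n\rvert$, so
\begin{equation*}
	\lvert B_n\rvert\le(1-\beta)^{n-1}\lvert B_1\rvert\to0 .
\end{equation*}
Suppose phase two is never entered. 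Then $\bigcap_n B_n=\{x\}$ for a single point $x$, and $x\notin C$, because each $c_k$ was excluded from $B_{k+1}$. Since $C$ is closed, $\operatorname{dist}(x,C)>0$, and since $\lvert B_n\rvert\to0$, we get $B_n\cap C=\emptyset$ for all large $n$. This contradicts the assumption that the switch never occurred, so phase two is always reached after finitely many moves.

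\emph{Phase two.} Let $N$ be the first index with $B_N\cap C=\emptyset$. Then $B_N$ is a nontrivial compact interval, and being connected and disjoint from $C$, it lies in a single component $U$ of $[0,1]\setminus C$. This holds also for the components touching $0$ or $1$, since compactness of $I$ is all that the hypothesis requires. By hypothesis $S\cap B_N$ is absolute winning in $B_N$, in particular $\beta$-absolute winning. Alice treats the rest of the game as a $\beta$-absolute game in $B_N$ with Bob's opening move $B_N$, and follows that winning strategy. The rules of the game played in $B_N$ coincide with the rules of the continuation in $[0,1]$, so the continuation is a legal play there. Hence $\bigcap_n B_n$ meets $S\cap B_N\subset S$, and Alice wins.

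The only delicate point is the one already addressed: Alice must guarantee that the switch actually happens, since she cannot know in advance which component Bob will settle into. This is exactly where closedness of $C$ and the forced geometric shrinking enter. Countability of $C$ is what lets her eliminate each point of $C$ in turn. I do not expect any further obstacles; one only needs to check the minor convention that Alice may always delete an interval of length exactly $\beta\lvert B_n\rvert$ containing a prescribed point of $B_n$, which is clear.
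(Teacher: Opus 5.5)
Your proof is correct and has the same two-phase structure as the paper's: Alice makes maximal deletions until Bob's interval lies in a single component of $[0,1]\setminus C$ (equivalently, misses $C$), then switches to the local winning strategy on that interval. The only difference is in phase one. The paper deletes arbitrarily, notes that a play which never switches must shrink to a point of $C$, and so proves only that $S\cup C$ is absolute winning, afterwards discarding the countable set $C$ via McMullen's stability facts. You instead delete the points of $C$ one at a time, which forces the switch and wins $S$ directly; in effect this inlines the proof of that stability fact.
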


\begin{proof}
	This is standard, but we include the short argument for completeness. Fix $0<\beta<1/3$. Alice begins by making arbitrary legal deletions of maximal length until Bob first chooses an interval contained in a single component $U$ of $[0,1]\setminus C$. If this never happens, then Bob's intervals shrink to a single point $\omega$. Since $C$ is closed, if $\omega\notin C$, then $\omega$ lies in some component $U$ of $[0,1]\setminus C$, and all sufficiently small Bob intervals would be contained in $U$, a contradiction. Hence $\omega\in C$, so Alice wins for the target $S\cup C$.

	Otherwise, suppose Bob chooses an interval $B_n$ contained in a component $U$ of $[0,1]\setminus C$. Choose a nontrivial compact interval $I\subset U$ containing $B_n$. From that point on, Alice plays the $\beta$-absolute winning strategy for $S\cap I$ inside $I$. Since all later Bob intervals remain inside $I$, this is also a legal strategy in the original game on $[0,1]$, and it forces $\bigcap B_n \subset S\cap I\subset S$. Thus $S\cup C$ is absolute winning in $[0,1]$ and the result follows.
\end{proof}

\section{The induced Markov structure}

All intervals are endowed with the subspace topology inherited from $\mathbb R$. We suppress the word ``relative'' when referring to open and closed subsets. All words/itineraries are assumed to be finite, and we allow the empty word $\varnothing$.

Define $\{r_n\}_{n\ge 0}\subset[0,1]$ recursively by
\begin{equation*}
	r_0\coloneqq1,
	\qquad
	f(r_{n+1})=r_n,
	\quad
	r_{n+1}\in(0,r_n).
\end{equation*}
Then $r_n\searrow 0$. Now define $\{p_n\}_{n\geq0} \subset [r_1,1]$ by
\begin{equation*}
	p_0\coloneqq
	\begin{cases}
		1,   & \text{if } f\restriction_{(r_1,1]} \text{ is increasing}, \\
		r_1, & \text{if } f\restriction_{(r_1,1]} \text{ is decreasing},
	\end{cases}
	\qquad
	f(p_n)=r_n \quad \text{for } n \geq 1.
\end{equation*}
If $f$ is increasing on $(r_1,1]$, then $p_n\searrow r_1$. If $f$ is decreasing on $(r_1,1]$, then $p_n\nearrow 1$.

For $k \geq 1$ let $J_k$ denote the closure of the first-return branch with return time $k$. Explicitly,
\begin{equation*}
	J_k \coloneqq
	\begin{cases}
		[p_k,p_{k-1}], & \text{if }f \restriction_{(r_1,1]}\text{ is increasing}, \\
		[p_{k-1},p_k], & \text{if }f \restriction_{(r_1,1]}\text{ is decreasing}.
	\end{cases}
\end{equation*}
The intervals $J_k$ form the first generation of the Markov partition for $F$, and $F$ maps the interior of each $J_k$ diffeomorphically onto $(r_1,1)$. In both orientation cases, the large-$k$ branches $J_k$ accumulate at the endpoint $q$ defined in \eqref{eq:q}.

Let $G_0 \coloneqq \{[r_1,1]\}$. For $n\ge 1$, $G_n$ is the family of closures of maximal intervals of monotonicity for $F^n$. If $\sigma=m_1\cdots m_n$ is a word in positive integers, write $J_\sigma\in G_n$ for the corresponding cylinder. Equivalently, $J_{\sigma k}$ is the child of $J_\sigma$ whose image under $F^n$ is $J_k$, up to endpoints. Thus the children $J_{\sigma k}$ accumulate at the unique endpoint
\begin{equation*}
	e_q(J_\sigma)
\end{equation*}
of $J_\sigma$ that maps to $q$ under the continuous extension of $F^n \restriction_{J_\sigma}$.

For $0<\theta<1$, define the dynamically oriented $q$-tail
\begin{equation}\label{eq:taudef}
	\Tau_{\theta}(J_\sigma) \coloneqq J_\sigma\cap B\bigl(e_q(J_\sigma),\theta \lvert J_\sigma\rvert\bigr).
\end{equation}
Equivalently, $\Tau_{\theta}(J_\sigma)$ is the subinterval of $J_\sigma$ of relative length $\theta$ that shares the endpoint $e_q(J_\sigma)$. In the increasing second-branch case these are the left tails. In the decreasing second-branch case the side alternates with the orientation of $F^n \restriction_{J_\sigma}$. The tail $\Tau_{\theta}(J_\sigma)$ contains all descendant cylinders $J_{\sigma k}$ for sufficiently large $k$.

The following estimates are the only geometric facts about the induced map needed below. They follow from Young's asymptotics near the neutral fixed point and a bounded distortion estimate for $F$; see \cite[Section 6.2]{Young1999} and \cite[Propositions 4.4 and 4.5]{DuvallMP}.

\begin{prop}[Geometry of the induced cylinders]\label{prop:geometry}
	There is a constant $C_1\ge 1$ such that for every word $\sigma$ and every $k\ge 1$,
	\begin{align}
		C_1^{-1}k^{-1/\gamma}
		 & \le
		\frac{\left\lvert\bigcup_{i=k}^\infty J_{\sigma i}\right\rvert}{\lvert J_\sigma\rvert}
		\le C_1 k^{-1/\gamma}, \label{eq:tail} \\
		C_1^{-1}k^{-1-1/\gamma}
		 & \le
		\frac{\lvert J_{\sigma k}\rvert}{\lvert J_\sigma\rvert}
		\le C_1 k^{-1-1/\gamma}.\label{eq:child}
	\end{align}
\end{prop}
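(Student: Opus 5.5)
We would first reduce the statement to the case of the empty word, that is, to estimates on first-generation branches, and then lift them to arbitrary cylinders by bounded distortion. For $\sigma=\varnothing$ we have $J_k$ with endpoints $p_{k-1},p_k$. The relation $f(p_n)=r_n$ holds with $f$ a $C^2$ diffeomorphism on the branch $(r_1,1]$ with derivative bounded above and away from $1$ and $0$ (by \ref{cond3}, \ref{cond4}). Hence $\lvert J_k\rvert\asymp r_{k-1}-r_k$ and $\lvert\bigcup_{i\ge k}J_i\rvert=\lvert p_{k-1}-q\rvert\asymp r_{k-1}$.

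Next we need Young's asymptotics for the sequence $r_n$. Condition \ref{condlast}, integrated twice from $f(0)=0$ and $f'(0)=1$, gives $f(x)=x+\Theta(x^{1+\gamma})$ near $0$. The recursion $r_{n}=r_{n+1}+\Theta(r_{n+1}^{1+\gamma})$ is then compared with the ODE $\dot r=-r^{1+\gamma}$. Concretely, we show $r_{n+1}^{-\gamma}-r_n^{-\gamma}=\Theta(1)$ uniformly in $n$ via a mean value argument on $x\mapsto x^{-\gamma}$. This yields $r_n\asymp n^{-1/\gamma}$, and therefore $r_{n-1}-r_n\asymp r_n^{1+\gamma}\asymp n^{-1-1/\gamma}$. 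Together these give \eqref{eq:tail} and \eqref{eq:child} for $\sigma=\varnothing$, with $\lvert J_\varnothing\rvert=1-r_1$ a fixed constant. Small $k$ only costs a constant.

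For general $\sigma=m_1\cdots m_n$, the map $F^n\restriction_{J_\sigma}$ is a diffeomorphism onto $[r_1,1]$ sending $J_{\sigma i}$ to $J_i$ and $\bigcup_{i\ge k}J_{\sigma i}$ to $\bigcup_{i\ge k}J_i$. So if $F^n$ has bounded distortion on $J_\sigma$, meaning $\sup\lvert (F^n)'\rvert/\inf\lvert (F^n)'\rvert\le D$ with $D$ independent of $n$ and $\sigma$, then ratios of lengths are preserved up to the factor $D$. Since $\lvert F^n(J_\sigma)\rvert=1-r_1$, the first-generation estimates transfer with $C_1$ enlarged by $D$.

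The main obstacle is this uniform bounded distortion for $F$ on all cylinders. The standard route is the Adler/Rényi condition $\sup\lvert F''\rvert/(F')^2<\infty$ on each branch, uniformly in $k$, together with uniform expansion of $F$. Summing over the $n$ iterates then gives a geometric series bound on $\log$ distortion, using $\lvert F^j(J_\sigma)\rvert\lesssim \lambda^{-(n-j)}$. To get the Rényi condition we would write $F\restriction_{J_k}=f^{k-1}\circ f$ and bound the distortion of $f^{k-1}$ on $(r_{k-1},r_{k-2})$-type fundamental domains near $0$. Here \ref{condlast} gives $\lvert f''\rvert/f'\lesssim x^{\gamma-1}$, and $\sum_j r_j^{\gamma-1}\lvert r_{j-1}-r_j\rvert\asymp\sum_j j^{-2}<\infty$ uniformly in $k$; this is exactly where the asymptotics of the previous step feed in. This is the content of \cite[Section 6.2]{Young1999} and \cite[Propositions 4.4, 4.5]{DuvallMP}, which we would cite, or else reproduce the telescoping sum just described.
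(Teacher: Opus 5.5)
Your proposal is correct and follows the paper's route. The paper gives no argument of its own: it cites Young's asymptotics $r_n\asymp n^{-1/\gamma}$ near the neutral fixed point together with a uniform bounded distortion estimate for $F$ (\cite[Section 6.2]{Young1999}, \cite[Propositions 4.4 and 4.5]{DuvallMP}), and this is exactly your reduction to first-generation branches followed by a distortion transfer to arbitrary cylinders $J_\sigma$. You fill in those cited ingredients (the mean-value comparison for $r_n^{-\gamma}$ and the summable series $\sum_j j^{-2}$ behind the uniform R\'enyi bound). One step deserves care: to turn your telescoping sum into the pointwise bound on $\lvert F''\rvert/(F')^2$, first use the per-branch distortion bound to replace $1/(f^{k-j})'(f^jx)$ by the length of the fundamental domain containing $f^jx$.
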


\section{Absolute winning for the induced map}

In this section we prove the following result.

\begin{thm}\label{thm:Efqwinning}
	The exceptional set $\mathcal{E}_F(q)$ is absolute winning in the induced domain $[r_1,1]$.
\end{thm}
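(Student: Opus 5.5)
We want Alice to force the limit point $\omega=\bigcap B_n$ to have an $F$-orbit that stays a definite distance from $q$. Staying away from $q$ means the orbit never enters deep branches $J_k$ with $k$ large: if $x$ has itinerary $m_1m_2\cdots$ with all $m_i\le K$, then every $F^n x$ lies in $\bigcup_{k\le K}J_k$, which is at positive distance from $q$. (The endpoint $q$ is itself a non-returning point; points whose orbits hit a branch endpoint form a countable set, which Theorem~\ref{thm:winningproperties} lets us ignore.) So the plan is to show that, for fixed $\beta$, there is $K=K(\beta)$ such that Alice can force $\omega$ to avoid $\Tau_\theta(J_\sigma)$ for every cylinder $J_\sigma$ that is ``visited'' along the way. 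Here $\theta$ is chosen so that $\Tau_\theta(J_\sigma)\supset\bigcup_{k>K}J_{\sigma k}$. By \eqref{eq:tail}, we may take $\theta\asymp C_1K^{-1/\gamma}$, and so deleting the tail of each cylinder costs only a tiny relative length.

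The strategy is organized by scales. For each Bob interval $B_n$, consider the cylinders $J_\sigma$ with $\lvert J_\sigma\rvert$ comparable to $\lvert B_n\rvert$. More precisely, take $\theta\lvert J_\sigma\rvert\le \beta\lvert B_n\rvert<\theta\lvert J_{\sigma^-}\rvert$, or use a similar stopping rule based on tail lengths: the tail $\Tau_\theta(J_\sigma)$ has length $\theta\lvert J_\sigma\rvert$ at most $\beta\lvert B_n\rvert$, and it is the first tail at this scale. The key geometric claim is that, because the tails $\Tau_\theta(J_\sigma)$ at a given scale are nested inside disjoint parent cylinders, only a bounded number of \emph{dangerous} tails of length $\approx\beta\lvert B_n\rvert$ meet $B_n$. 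In fact we want at most one such tail per Bob move, after possibly skipping scales. Alice deletes that tail, which is a legal move since its length is $\le\beta\lvert B_n\rvert$. Tails much smaller than $B_n$ are postponed until Bob's interval shrinks to their scale. Tails much larger than $B_n$ either were already handled, or $B_n$ sits inside the complement of the tail. Here the child-size estimate \eqref{eq:child} is used: a cylinder $J_{\sigma k}$ with $k\le K$ has size $\ge C_1^{-1}K^{-1-1/\gamma}\lvert J_\sigma\rvert$. Consequently each Bob step, which shrinks by a factor of at least $\beta$, descends through only boundedly many cylinder generations among the allowed branches, and each generation contributes one tail.

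The main obstacle is making ``one deletion per move'' work, because Alice may delete only one interval per turn while several tails of comparable size could meet $B_n$. To handle this, I would first pass to a sparse subsequence of turns. Alice plays every $N$-th turn and wastes the others. The effective game is then a $\beta^N$-type game against a bounded number of threats, and I would use the standard McMullen trick for absolute games: a target that must be avoided by deleting neighborhoods of boundedly many small intervals per scale can be handled when $\beta$ is small relative to the geometric constants. For general $\beta<1/3$ I would exploit the fact that we may choose $K$ (hence $\theta$) depending on $\beta$. I would aim to show that the tails at consecutive levels are separated by gaps of relative size $\gg\beta$, so that any Bob interval meets at most one undeleted tail of the current scale.

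Finally, we must check that avoiding all tails of visited cylinders implies $\omega$ has all itinerary digits $\le K$, and hence $\omega\in\mathcal E_F(q)$ up to the countable endpoint set. This follows because $\omega$ lies in a nested sequence $J_{m_1\cdots m_n}$; if some $m_{n+1}>K$, then $\omega\in\Tau_\theta(J_{m_1\cdots m_n})$, which was deleted.
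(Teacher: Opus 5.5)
\paragraph{Reduction: fixable.} Your reduction to tail-avoidance is sound; it is essentially Lemma~\ref{lem:tail-to-exceptional}. The one correction is that you can only force the itinerary digits to be $\le K$ \emph{eventually}. With $\theta$ depending only on $\beta$, Bob may open inside $J_{K+1}$, or inside some $\Tau_\theta(J_\sigma)$, and then $\omega$ sits there. So you must discard finitely many generations and choose $\theta$ after Bob's first move; the paper does this via $M'$.

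\paragraph{The genuine gap: one deletion per turn.} The winning strategy rests entirely on Alice needing only one legal deletion per turn, and you leave this as an aim. Your suggested justification is false for every choice of $K$ and $\theta$. You propose choosing $K$ so that tails at consecutive levels are separated by gaps of relative size $\gg\beta$. But in the decreasing orientation $F$ reverses orientation on every branch, and one checks
\[
e_q(J_3)=p_2=e_{\mathrm{opp}}(J_2)=e_q(J_{21}).
\]
So $\Tau_\theta(J_3)$ and $\Tau_\theta(J_{21})$ abut at $p_2$. These are tails of consecutive generations, with comparable lengths by \eqref{eq:child}, and the same configuration occurs at $e_{\mathrm{opp}}(J_\sigma)$ for every $\sigma$.

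Your fallback does not help either. Wasting turns only gives Bob more room. Moreover, small $\beta$ is the \emph{hard} case, because a $\beta$-absolute strategy remains legal in the $\beta'$-game for every $\beta'>\beta$. So ``handled when $\beta$ is small relative to the geometric constants'' points the wrong way.

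\paragraph{How the paper gets a single threat.} The paper obtains a single threat per stage combinatorially rather than metrically.
\begin{itemize}
\item Alice acts only at the times when Bob's interval first becomes commensurate with a new generation $g$. Such an interval meets at most two cylinders of $G_{g-1}$, and these are adjacent.
\item Each $G_g$ is two-colored so that adjacent cylinders differ. Then at most one tail of a given color $\varepsilon$ is active at each stage.
\item Alice therefore wins separately for each $W_\varepsilon(q)$. The conclusion follows from $W_0(q)\cap W_1(q)\subset\mathcal E_F(q)\cup\mathcal C_F$ and stability under intersections.
\item The supported-cylinder lemmas supply two things: legality of the single deletion (invariant $P_2$), and automatic avoidance of tails in generations Bob skips in one move (invariant $P_3$). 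The point is that a skipped-generation tail met by Bob's new interval would force that interval to contain a cylinder of too early a generation.
\end{itemize}

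\paragraph{What your route would still need.} To keep your scale-by-scale route, you would have to prove two things, and neither is in the proposal. First, that all not-yet-handled tails of length comparable to $\beta\lvert B_n\rvert$ meeting $B_n$ fit inside one interval of length $\beta\lvert B_n\rvert$. This must account for abutting pairs like the one above and for neighbouring cylinders of very different sizes. Second, bookkeeping showing that every tail is in fact handled at some turn.
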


Membership in \(\mathcal E_F(q)\) is guaranteed by uniform avoidance of sufficiently deep \(q\)-tails, modulo endpoints, as the next lemma shows. For $K\ge 1$, set
\begin{equation*}
	U_K \coloneqq \{q\} \cup \bigcup_{i=K}^\infty J_i.
\end{equation*}
When $K>1$, its interior $U_K^\circ$ is a one-sided neighborhood of $q$ in the inducing base equal to $[r_1,p_{K-1})$ in the increasing second branch case and $(p_{K-1},1]$ in the decreasing case. Also define
\begin{equation*}
	\mathcal{C}_F \coloneqq \bigcup_{\lvert \sigma \rvert \geq0} \partial J_\sigma
\end{equation*}
to be the countable set of all endpoints of all cylinder sets (where $J_\varnothing \coloneqq [r_1,1]$).

\begin{lem}\label{lem:tail-to-exceptional}
	Fix $0<\theta<1$ and an integer $N \geq 0$. Suppose that $x\notin \Tau_{\theta}(J_\sigma)$ for all words $\sigma$ with $\lvert \sigma \rvert \geq N$. Then $x\in\EFq \cup \mathcal{C}_F$.
\end{lem}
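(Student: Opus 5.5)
We argue by contraposition. Suppose $x\notin\mathcal C_F$ and $x\notin\EFq$; we show that $x\in\Tau_\theta(J_\sigma)$ for some word $\sigma$ with $\lvert\sigma\rvert\ge N$. Since $x\notin\mathcal C_F$, $x$ is never mapped to a cylinder endpoint. Hence $x$ has a well-defined infinite itinerary $m_1m_2\cdots$, the orbit $F^n x$ is defined for all $n$, and $F^n x$ lies in the interior of $J_{m_{n+1}}$. Since $x\notin\EFq$, the point $q$ lies in the orbit closure of $x$. Because $x\notin\mathcal C_F$ and $q$ is an endpoint of the base $[r_1,1]$, hence lies in $\mathcal C_F$, we have $F^nx\neq q$ for every $n$. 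So $q$ is an accumulation point of the orbit, and there are infinitely many $n$ with $F^n x$ arbitrarily close to $q$.

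Fix $K$ large, to be chosen in terms of $\theta$. By the above, there are infinitely many $n\ge N$ with $F^n x\in U_K^\circ$. Since $F^nx$ is not an endpoint, this means $m_{n+1}\ge K$. Take such an $n$ and set $\sigma=m_1\cdots m_n$, so that $\lvert\sigma\rvert=n\ge N$ and $x\in J_{\sigma m_{n+1}}\subset\bigcup_{i\ge K}J_{\sigma i}$. The union $\bigcup_{i\ge K}J_{\sigma i}$ is an interval sharing the endpoint $e_q(J_\sigma)$, because the children with large index accumulate at $e_q(J_\sigma)$ by construction. By \eqref{eq:tail}, its length is at most $C_1K^{-1/\gamma}\lvert J_\sigma\rvert$. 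Choosing $K$ so that $C_1K^{-1/\gamma}\le\theta$, this union lies inside $\Tau_\theta(J_\sigma)$. Hence $x\in\Tau_\theta(J_\sigma)$, contradicting the hypothesis.

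The only step needing care is the claim that $\bigcup_{i\ge K}J_{\sigma i}$ is exactly the sub-interval of $J_\sigma$ adjacent to $e_q(J_\sigma)$. The argument is as follows. The map $F^n$ restricted to $J_\sigma$ is a homeomorphism onto $[r_1,1]$, once extended continuously to the endpoints. Under this map, $\bigcup_{i\ge K}J_{\sigma i}$ corresponds to $U_K$, which is an interval containing $q$. Its preimage is therefore an interval containing $e_q(J_\sigma)$. This holds in both orientation cases. In the decreasing case one must track which side $e_q$ lies on, but the definition of $e_q$ handles this automatically.
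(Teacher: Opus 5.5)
Your proof is correct and follows essentially the same argument as the paper. You use \eqref{eq:tail} to fix $K$, uniformly in $\sigma$, so that $\bigcup_{i\ge K}J_{\sigma i}\subset\Tau_\theta(J_\sigma)$; then any visit $F^n(x)\in U_K^\circ$ with $n\ge N$ puts $x$ in $\Tau_\theta(J_\sigma)$ for the length-$n$ cylinder $J_\sigma$ containing $x$. The differences are cosmetic: you argue by contraposition, getting visits with $n\ge N$ from $q$ being an accumulation point, where the paper handles the first $N$ iterates separately, and you also justify why the high-index children form the subinterval adjacent to $e_q(J_\sigma)$, which the paper takes for granted.
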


\begin{proof}
	We may assume $x \notin \mathcal{C}_F$. By \eqref{eq:tail}, choose $K$ so large that
	\begin{equation*}
		\left\lvert\bigcup_{i=K}^\infty J_{\sigma i}\right\rvert\le \theta \lvert J_\sigma\rvert
	\end{equation*}
	for every word $\sigma$; this gives
	\begin{equation*}
		\bigcup_{i=K}^\infty J_{\sigma i}\subset \Tau_{\theta}(J_\sigma).
	\end{equation*}
	Now suppose $F^n(x)\in U_K^\circ$ for some $n \geq N$. Let $J_\sigma$ be the unique length-$n$ cylinder containing $x$. Then
	\begin{equation*}
		x\in\bigcup_{i=K}^\infty J_{\sigma i} \subset \Tau_{\theta}(J_\sigma),
	\end{equation*}
	a contradiction. Thus the points $\{F^n(x)\}_{n=N}^\infty$ miss the neighborhood $U_K^\circ$ of $q$, and since $x \notin \mathcal{C}_F$, the finite collection $\{F^n(x)\}_{n=0}^{N-1}$ misses $q$. Thus $x \in \mathcal{E}_F(q)$.
\end{proof}

The point requiring care in the proof of Theorem \ref{thm:Efqwinning} is that Bob's intervals can meet two cylinder tails of the same generation that Alice must avoid: one from each of two adjacent parent cylinders. In the decreasing case these tails may even meet at the common endpoint. Thus one cannot assert that there is a unique active tail at each step for Alice to avoid. We remove this obstruction by coloring the cylinders.

For each generation $g\ge 0$, fix a coloring
\begin{equation*}
	\chi_g \colon G_g\to\{0,1\}
\end{equation*}
such that adjacent elements of $G_g$ have different colors. If $I\in G_g$, write $\chi(I)=\chi_g(I)$.

For $\varepsilon\in\{0,1\}$ define
\begin{equation*}
	\begin{split}
		W_\varepsilon(q) \coloneqq \big\{x\in[r_1,1]\colon{} & \text{there exist }\theta>0\text{ and }N\ge0\text{ such that}              \\
		                                                     & x\notin \Tau_{\theta}(I)\text{ for every cylinder }I\in\bigcup_{g\ge N}G_g \\
		                                                     & \text{with }\chi(I)=\varepsilon\big\}.
	\end{split}
\end{equation*}
We will prove in Proposition \ref{prop:colored-winning} below that $W_\varepsilon(q)$ is absolute winning for $\varepsilon \in \{0,1\}$. Theorem \ref{thm:Efqwinning} will then follow easily.

The proof of Proposition \ref{prop:colored-winning} uses the notion of commensurability with the Markov partition, introduced in \cite{ManceTseng}. A proper nontrivial closed interval $B\subsetneq[r_1,1]$ is said to be \emph{commensurate with generation $g$}, abbreviated c.w.g.\ $g$, if $B$ contains an element of $G_g$ but no element of $G_{g-1}$. Every proper nontrivial closed interval is c.w.g.\ a unique generation. If $B$ is c.w.g.\ $g$, then $B$ intersects at most two elements of $G_{g-1}$. See \cite[Section 5]{DuvallMP} for proofs of these elementary facts.

Next we introduce the key notion of supported cylinders.

\begin{defn}[Supported cylinders]\label{def:supported-cylinder}
	Let $B$ be a closed interval, let $g\geq 1$, and fix
	$J_\sigma\in G_{g-1}$. Put
	\begin{equation*}
		S_B(J_\sigma)
		\coloneqq
		\{a\geq 1\colon J_{\sigma a}\subset B\}.
	\end{equation*}
	If $S_B(J_\sigma)=\emptyset$, define $\mathcal A_B(J_\sigma)\coloneqq\emptyset$; otherwise set
	\begin{equation*}
		a_0(J_\sigma)\coloneqq \min S_B(J_\sigma),
		\qquad
		b_0(J_\sigma)\coloneqq \max\{1,a_0(J_\sigma)-1\},
	\end{equation*}
	and define the support tail of $J_\sigma$ by
	\[
		\mathcal A_B(J_\sigma)
		\coloneqq
		\{J_{\sigma a}\colon a\geq b_0(J_\sigma)\}.
	\]

	Now let $J\in G_h$ with $h\geq g$, and write
	\[
		J=J_{\sigma a\rho},
		\qquad
		\lvert\sigma\rvert=g-1,
		\qquad
		\lvert\rho\rvert=h-g.
	\]
	We say that \emph{$J$ is supported by $B$ at generation $g$} if
	\[
		J_{\sigma a}\in \mathcal A_B(J_\sigma).
	\]
	Equivalently, the generation-\(g\) ancestor of \(J\) is either one of the children \(J_{\sigma a}\) with \(a\ge a_0\), or, when \(a_0>1\), the immediate predecessor \(J_{\sigma(a_0-1)}\), which necessarily meets $B$.
\end{defn}

The purpose of this definition is size control: a supported cylinder need not meet Bob's interval \(B\), but its generation-\(g\) ancestor is nevertheless comparable in size to the first child of \(J_\sigma\) contained in \(B\).

\begin{lem}[Local support criterion]\label{lem:local-support-criterion}
	Let $B$ be a closed interval, let $g\geq1$, and let $J\in G_h$ with $h\geq g$. Write
	\[
		J=J_{\sigma a\rho},
		\qquad
		\lvert\sigma\rvert=g-1,
		\qquad
		\lvert\rho\rvert=h-g.
	\]
	Assume that
	\[
		B\cap J\setminus\mathcal{C}_F\neq\emptyset
	\]
	and that
	\[
		S_B(J_\sigma)\neq\emptyset.
	\]
	Then $J$ is supported by $B$ at generation $g$.
\end{lem}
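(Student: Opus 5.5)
We must show that the generation-$g$ ancestor $J_{\sigma a}$ of $J$ satisfies $a\ge b_0(J_\sigma)$, where $a_0=\min S_B(J_\sigma)$ exists by hypothesis. If $a\ge a_0$ there is nothing to prove, and if $a_0=1$ then $b_0=1\le a$ trivially. So the plan is to assume $a_0>1$ and $a<a_0-1$, i.e.\ $a\le a_0-2$, and derive a contradiction.

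The argument uses only the linear order of the children of $J_\sigma$ inside $J_\sigma$. Since $F^{g-1}$ maps the interior of $J_\sigma$ monotonically onto $(r_1,1)$ and the $J_k$ are laid out monotonically in $k$ toward $q$, the children $J_{\sigma 1},J_{\sigma 2},\dots$ appear in order along $J_\sigma$. They are consecutive, adjacent ones share an endpoint, and they accumulate at $e_q(J_\sigma)$. In particular, for $a\le a_0-2$ the child $J_{\sigma(a_0-1)}$ is a nontrivial interval lying strictly between $J_{\sigma a}$ and $J_{\sigma a_0}$. Every point of $J_{\sigma a}$ is separated from every point of $J_{\sigma a_0}$ by its interior, since the only points shared by non-adjacent children are none, and adjacent children share only endpoints in $\mathcal C_F$.

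Now pick $x\in B\cap J\setminus\mathcal C_F$. Then $x\in J\subset J_{\sigma a}$, and $x$ is not an endpoint of any cylinder. Pick any $y\in J_{\sigma a_0}\subset B$. Because $B$ is an interval containing $x$ and $y$, it contains the segment between them, and hence the interior of $J_{\sigma(a_0-1)}$; since $B$ is closed, it contains $J_{\sigma(a_0-1)}$ itself. This contradicts the minimality of $a_0$. Therefore $a\ge a_0-1=b_0$, so $J_{\sigma a}\in\mathcal A_B(J_\sigma)$ and $J$ is supported by $B$ at generation $g$.

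The role of $x\notin\mathcal C_F$ is to make sure that $x$ really lies in $J_{\sigma a}$ and not merely on a boundary point shared with a neighbour. It also rules out the possibility that $J$ is a degenerate touching cylinder, for example at the common endpoint of two parents or at $e_q(J_\sigma)$. The main thing to verify carefully is the ordering claim: the children of $J_\sigma$ are arranged monotonically in $k$, with $J_{\sigma(k-1)}$ and $J_{\sigma k}$ adjacent, in both orientation cases. I would justify this from the description of $J_k$ as $[p_k,p_{k-1}]$ or $[p_{k-1},p_k]$, pulled back under the monotone branch $F^{g-1}\restriction_{J_\sigma}$.
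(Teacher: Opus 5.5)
Your argument is correct and is essentially the paper's proof: assuming $a\le a_0-2$, the interval $B$ contains a point of $J_{\sigma a}$ and all of $J_{\sigma a_0}$, hence a whole intervening child of index less than $a_0$, contradicting the minimality of $a_0$ (the paper uses $J_{\sigma(a+1)}$ where you use $J_{\sigma(a_0-1)}$, which makes no difference). One small correction to your closing commentary: any point of $B\cap J$ already lies in the closed interval $J_{\sigma a}$, and the convexity argument still works if that point is a cylinder endpoint, so the proof only needs $B\cap J\neq\emptyset$ and not $x\notin\mathcal C_F$; using the stronger hypothesis does no harm.
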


\begin{proof}
	Put
	\[
		a_0\coloneqq a_0(J_\sigma),
		\qquad
		b_0\coloneqq b_0(J_\sigma).
	\]
	By definition,
	\[
		J_{\sigma a_0}\subset B.
	\]
	Since $B\cap J\setminus\mathcal{C}_F\neq\emptyset$, the interval $B$ meets the interior of $J$. In particular, $B$ meets the interior of the generation-$g$ ancestor $J_{\sigma a}$.

	We claim that $a\geq b_0$. If $a_0=1$, then $b_0=1$, and this is immediate. Suppose $a_0\geq2$, so that $b_0=a_0-1$. If $a<a_0-1$, then the connected interval $B\cap J_\sigma$ meets $J_{\sigma a}$ and contains $J_{\sigma a_0}$. Therefore it contains the intervening child $J_{\sigma(a+1)}$. But then
	\[
		a+1<a_0
	\]
	and
	\[
		J_{\sigma(a+1)}\subset B,
	\]
	contradicting the minimality of $a_0$.

	Thus $a\geq b_0$, and hence
	\[
		J_{\sigma a}\in\mathcal A_B(J_\sigma).\qedhere
	\]
\end{proof}

The next lemma identifies the only ways in which Bob's interval can meet a deep tail: either the relevant cylinder is supported by Bob's previous interval, or Bob's interval is already large compared with that cylinder.

For any cylinder \(J=J_\sigma\), recall that \(e_q(J)\) is the endpoint of \(J\) that is mapped to \(q\) by the continuous extension of \(F^{\lvert \sigma\rvert}\restriction_{J_\sigma}\). Define
\begin{equation*}
	e_{\mathrm{opp}}(J)
\end{equation*}
to be the endpoint of \(J\) opposite \(e_q(J)\).

\begin{lem}[Support-or-size criterion]\label{lem:support-or-size}
	Let $B$ be a closed interval that is c.w.g.\ $g\geq1$, let $0<\theta<1$, and let $J\in G_h$ with $h\geq g$. If
	\[
		B\cap\Tau_\theta(J)\setminus\mathcal{C}_F\neq\emptyset,
	\]
	then one of the following conclusions holds.
	\begin{enumerate}[label=(\roman*)]
		\item\label{it:support-or-size-supported} There exists an integer $s$ with $g\leq s\leq h$ such that $J$ is supported by $B$ at generation $s$.
		\item\label{it:support-or-size-large} One has
		      \[
			      \lvert B\rvert\geq (1-\theta)\lvert J\rvert.
		      \]
	\end{enumerate}
\end{lem}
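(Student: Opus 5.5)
The plan is to fix a witness point $x\in B\cap\Tau_\theta(J)\setminus\mathcal C_F$ and to follow the ancestor chain of $J$. Write $J^{(s)}\in G_s$ for the generation-$s$ ancestor of $J$, for $g-1\le s\le h$, so that $J^{(h)}=J$. Since $x\in J\setminus\mathcal C_F$, the hypothesis $B\cap J\setminus\mathcal C_F\neq\emptyset$ of Lemma \ref{lem:local-support-criterion} holds for $J$ at every generation $s\in[g,h]$.

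Hence, if $S_B(J^{(s-1)})\neq\emptyset$ for some $s\in[g,h]$, Lemma \ref{lem:local-support-criterion} shows directly that $J$ is supported by $B$ at generation $s$, which is conclusion \ref{it:support-or-size-supported}. It therefore suffices to assume
\[
S_B(J^{(s-1)})=\emptyset\qquad\text{for every } g\le s\le h,
\]
and to derive conclusion \ref{it:support-or-size-large}.

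Under this assumption, I would first locate $B$ relative to $J$. Since $x\in B\cap J$ and $B$ is an interval, either $B\supseteq J$, or $B$ contains exactly one endpoint of $J$, or $B$ lies inside $J$.
\begin{enumerate}[label=(\alph*)]
\item If $B\supseteq J$, then $\lvert B\rvert\ge\lvert J\rvert$ and we are done.
\item If $B$ contains $e_{\mathrm{opp}}(J)$, then $B$ contains the segment from $x$ to $e_{\mathrm{opp}}(J)$. Because $x\in\Tau_\theta(J)$ lies within $\theta\lvert J\rvert$ of $e_q(J)$, this segment has length at least $(1-\theta)\lvert J\rvert$.
\item If $B\subset J$, I would use commensurability. $B$ contains some $I\in G_g$, and $I\subset B\subset J\subset J^{(g)}$. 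Two cylinders of the same generation are nested only if they are equal, so $I=J^{(g)}$, forcing $J^{(g)}=J$ and $B\supseteq J$, which is case (a).
\end{enumerate}
The remaining case is that $B$ contains $e_q(J)$ but not $e_{\mathrm{opp}}(J)$, so $B$ protrudes from $J$ across $e_q(J)$.

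This last case is the main obstacle, and I would treat it by descending induction on the ancestor chain. Let $s_0\ge g$ be the smallest generation such that $e_q(J)$ is an interior point of $J^{(s_0-1)}$. If no such $s_0\le h$ exists, then $e_q(J)$ is an endpoint of $J^{(g-1)}$, and $B$ straddles two adjacent elements of $G_{g-1}$.

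In the case $s_0\le h$, both $J^{(s_0)}$ and its neighbouring sibling across $e_q(J)$ are children of $J^{(s_0-1)}$. I would show that $B$, being an interval that contains $e_q(J)$ together with the interior point $x$, must contain an entire child of $J^{(s_0-1)}$ lying between the gen-$g$ cylinder $I\subset B$ and $x$. This contradicts $S_B(J^{(s_0-1)})=\emptyset$. The intended mechanism is that the gen-$g$ cylinder $I\subset B$ lies on the far side of $e_q(J)$, so $B$ spans all the intermediate children of $J^{(s_0-1)}$.

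The delicate point is the straddling case, where the emptiness of all $S_B(J^{(s-1)})$ must be reconciled with $B$ containing the gen-$g$ cylinder $I$ in the other gen-$(g-1)$ parent. Here I would use that $B$ meets at most two elements of $G_{g-1}$. I would also use that the children of $J^{(s-1)}$ near a shared endpoint that is not $e_q$ are the bounded-index children $J_{\sigma 1}$, which have definite proportion by \eqref{eq:child}. If this case resists, the fallback is to show directly that $B$ then contains all of $J$, which again gives conclusion \ref{it:support-or-size-large}.
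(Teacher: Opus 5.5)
Your opening reduction (if some $S_B(J^{(s-1)})\neq\emptyset$, apply Lemma~\ref{lem:local-support-criterion}) is the paper's. Your cases (a)--(c) and the interior-transition subcase are sound, up to one slip. The generations $s$ with $e_q(J)$ interior to $J^{(s)}$ form an initial segment of $\{g-1,\dots,h\}$, so $s_0-1$ must be the \emph{last} such generation, not the first. Only then is $e_q(J)$ a common endpoint of $J^{(s_0)}$ and a sibling, and $B$ must swallow that sibling on its way to the gen-$g$ cylinder $I$.

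The genuine gap is the straddling case, which you leave open. Neither the definite-proportion remark nor the fallback ``$B\supseteq J$'' closes it. The missing idea is the one the paper's induction runs on. The first child $J_{\sigma1}$ is the child containing $e_{\mathrm{opp}}(J_\sigma)$, and the paper uses $e_{\mathrm{opp}}(J_{\sigma1})=e_{\mathrm{opp}}(J_\sigma)$. The paper first puts $e_{\mathrm{opp}}(J^{(g-1)})$ in $B$. Emptiness of each $S_B$ then forces every child index $a_s=1$, so $e_{\mathrm{opp}}$ propagates down to $J$, which is your case (b). In your framework it works as follows. In the straddling case, $e_q(J)\in J^{(g)}$ is an endpoint of $J^{(g-1)}$. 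Since no child contains $e_q(J^{(g-1)})$, it equals $e_{\mathrm{opp}}(J^{(g-1)})$, so $J^{(g)}=J^{(g-1)}1$. Iterating the identity down the chain gives $e_q(J)=e_{\mathrm{opp}}(J)$, which is absurd. So the straddling case is vacuous when $f\restriction_{(r_1,1]}$ is increasing, and your proof goes through there.

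That identity fails when $f\restriction_{(r_1,1]}$ is decreasing. There one has $e_q(J_{\sigma1})=e_{\mathrm{opp}}(J_\sigma)$ instead; for example $e_{\mathrm{opp}}(J_\varnothing)=r_1=e_q(J_1)$, while $e_{\mathrm{opp}}(J_1)=p_1$. So your straddling configuration genuinely occurs. In that orientation $e_q(J_k)=p_{k-1}$ and $p_k=e_{\mathrm{opp}}(J_k)=e_q(J_{k+1})=e_q(J_{k1})$. For small $\delta>0$ take $B=[p_k-\delta,p_k+\delta]$:
\begin{itemize}
\item $B$ contains $J_{(k+1)m}$ for all large $m$ but no element of $G_1$, so it is c.w.g.\ $2$;
\item it meets $\Tau_\theta(J_{k1})$ in a nondegenerate interval;
\item $B\cap J_k\subsetneq J_{k1}$ gives $S_B(J_k)=\emptyset$, so (i) fails for $J=J_{k1}\in G_2$;
\item $\lvert B\rvert=2\delta<(1-\theta)\lvert J_{k1}\rvert$, so (ii) fails.
\end{itemize}
Thus in the decreasing orientation the statement itself is false, and the paper's inductive step is invalid there. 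No argument for your straddling case can succeed in that orientation. What your approach, completed as above, actually proves is the lemma for the increasing orientation.
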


\begin{proof}
	Write the ancestor chain of $J$ from generation $g-1$ to generation $h$ as
	\[
		J_{\eta_{g-1}}\supset J_{\eta_g}\supset\cdots\supset J_{\eta_h}=J,
		\qquad
		J_{\eta_s}=J_{\eta_{s-1}a_s}.
	\]
	If \(S_B(J_{\eta_s})\neq\emptyset\) for some \(g-1\le s\le h-1\), then Lemma~\ref{lem:local-support-criterion}, applied with generation \(s+1\), shows that \(J\) is supported by \(B\) at generation \(s+1\). Since \(g\le s+1\le h\), alternative (i) holds. Thus suppose
	\begin{equation*}
		S_B(J_{\eta_s}) = \emptyset
	\end{equation*}
	for all $g-1 \leq s \leq h-1$. Observe that $B$ meets the interior of each cylinder set in the ancestor chain of $J$ since $B \cap \Tau_\theta(J) \setminus \mathcal{C}_F \neq \emptyset$.

	We now use induction to show that
	\[
		e_{\mathrm{opp}}(J_{\eta_s})\in B
	\]
	for every $g-1\leq s\leq h$.  For the initial step, since $B$ is c.w.g.\ $g$, it contains some element of $G_g$. This element cannot be a child of $J_{\eta_{g-1}}$, because $S_B(J_{\eta_{g-1}})=\emptyset$. Since $B$ also meets the interior of $J_{\eta_{g-1}}$, it follows that $B$ contains an endpoint of $J_{\eta_{g-1}}$. This endpoint cannot be $e_q(J_{\eta_{g-1}})$, because then $B$ would contain arbitrarily large-index children of $J_{\eta_{g-1}}$, contradicting $S_B(J_{\eta_{g-1}})=\emptyset$. Hence
	\[
		e_{\mathrm{opp}}(J_{\eta_{g-1}})\in B.
	\]

	Now fix $t$ with $g\leq t\leq h$, and suppose that
	\[
		e_{\mathrm{opp}}(J_{\eta_{t-1}})\in B.
	\]
	Then since $B$ meets the interior of $J_{\eta_{t-1}}$, $B$ also meets $J_{\eta_{t-1}1}$. Also
	\begin{equation*}
		e_{\mathrm{opp}}(J_{\eta_{t-1}1}) = e_{\mathrm{opp}}(J_{\eta_{t-1}})\in B.
	\end{equation*}
	If $B$ also meets $J_{\eta_{t-1}2}$, then $B$ contains the first child $J_{\eta_{t-1}1}$, contradicting
	\[
		S_B(J_{\eta_{t-1}})=\emptyset.
	\]
	Therefore $a_t=1$ and hence
	\[
		e_{\mathrm{opp}}(J_{\eta_t})=e_{\mathrm{opp}}(J_{\eta_{t-1}})\in B,
	\]
	completing the induction.

	In particular,
	\[
		e_{\mathrm{opp}}(J)\in B.
	\]
	Since $B$ also meets the tail $\Tau_\theta(J)$, $B$ contains the interval $J \setminus \Tau_\theta(J)$. Hence
	\[
		\lvert B\rvert\geq \lvert J \rvert - \lvert \Tau_\theta(J) \rvert = (1-\theta)\lvert J\rvert.\qedhere
	\]
\end{proof}

\begin{lem}[Enlarged tail estimate]\label{lem:enlarged-tail-estimate}
	Let $M>4^{1/\gamma}C_1$ and set $\theta=M^{-1}$. Let $J=J_\tau$, and choose $K\geq1$ so that
	\[
		\bigcup_{i=K+1}^{\infty}J_{\tau i}
		\subset
		\Tau_\theta(J)
		\subset
		\bigcup_{i=K}^{\infty}J_{\tau i}.
	\]
	Then $K\geq2$ and
	\begin{equation}\label{eq:enlarged-tail-estimate}
		\frac{\left\lvert\bigcup_{i=K-1}^{\infty}J_{\tau i}\right\rvert}{\lvert J\rvert}
		\leq
		2^{1/\gamma}C_1^2M^{-1}.
	\end{equation}
\end{lem}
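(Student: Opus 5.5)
The plan is to read everything off the two-sided tail estimate \eqref{eq:tail} of Proposition \ref{prop:geometry}, applied with $\sigma=\tau$. Write
\[
R_k\coloneqq\frac{\bigl\lvert\bigcup_{i=k}^\infty J_{\tau i}\bigr\rvert}{\lvert J\rvert},
\]
so that $R_1=1$ and $C_1^{-1}k^{-1/\gamma}\le R_k\le C_1k^{-1/\gamma}$. Since $\Tau_\theta(J)$ is the subinterval of $J$ of relative length $\theta$ sharing the endpoint $e_q(J)$, and the tails $\bigcup_{i\ge k}J_{\tau i}$ are nested intervals sharing that same endpoint, the two inclusions in the hypothesis are equivalent to
\[
R_{K+1}\le\theta\le R_K.
\]

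First I show $K\ge2$. If $K=1$, the left inclusion gives $R_2\le\theta=M^{-1}$. Combined with the lower bound $R_2\ge C_1^{-1}2^{-1/\gamma}$, this yields $M\le 2^{1/\gamma}C_1<4^{1/\gamma}C_1$, contradicting the choice of $M$. Hence $K\ge2$, and $K-1\ge1$, so the union in \eqref{eq:enlarged-tail-estimate} is indexed by a legitimate child.

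For the estimate itself I bound $R_{K-1}$ by comparing it with $R_{K+1}\le\theta$. The upper bound gives $R_{K-1}\le C_1(K-1)^{-1/\gamma}$, and the lower bound gives $(K+1)^{-1/\gamma}\le C_1R_{K+1}\le C_1M^{-1}$. Therefore
\[
R_{K-1}\le C_1^2\Bigl(\frac{K+1}{K-1}\Bigr)^{1/\gamma}M^{-1}.
\]
The main obstacle is the constant: since $K\ge2$, the ratio $(K+1)/(K-1)$ is at most $3$, not $2$. That crude bound gives only $3^{1/\gamma}C_1^2M^{-1}$.

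To obtain the stated $2^{1/\gamma}$, I would compare $R_{K-1}$ with $R_K$ rather than with $R_{K+1}$. This uses the lower bound $\theta\le R_K$ to place $K$ at scale $M^{\gamma}$, and then $R_{K-1}\le C_1(K-1)^{-1/\gamma}$ together with $K\ge 2$ gives $K/(K-1)\le2$. The difficulty is that this route gives $C_1K^{-1/\gamma}$ in terms of $\theta$ only through the lower bound $R_K\ge\theta$, which points the wrong way.

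So the honest route is as follows. First use $M>4^{1/\gamma}C_1$ to improve the lower bound on $K$. If $K=2$ then $R_3\le M^{-1}$, which forces $M\le 3^{1/\gamma}C_1<4^{1/\gamma}C_1$, a contradiction; hence $K\ge3$. More generally, $(K+1)^{-1/\gamma}\le C_1M^{-1}<4^{-1/\gamma}$ gives $K+1>4$, so $K\ge4$. Then $(K+1)/(K-1)\le 5/3\le2$, and the displayed inequality yields $R_{K-1}\le 2^{1/\gamma}C_1^2M^{-1}$, which is \eqref{eq:enlarged-tail-estimate}.
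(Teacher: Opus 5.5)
Your final argument is correct and is essentially the paper's proof: both combine the lower bound in \eqref{eq:tail} with the first inclusion to get $K+1\ge C_1^{-\gamma}M^\gamma>4$, and then apply the upper bound to $R_{K-1}$. The paper writes the last step as $K-1\ge \tfrac12 C_1^{-\gamma}M^\gamma$, while you write it as $(K+1)/(K-1)\le 5/3\le 2$; the exploratory paragraphs (the crude factor $3^{1/\gamma}$ and the abandoned comparison with $R_K$) can be deleted, since $K\ge 4$ already subsumes your separate $K\ge 2$ and $K\ge 3$ cases.
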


\begin{proof}
	Using the lower bound in \eqref{eq:tail} together with the first inclusion above, we obtain
	\[
		C_1^{-1}(K+1)^{-1/\gamma}\leq M^{-1}.
	\]
	Hence
	\[
		K+1\geq C_1^{-\gamma}M^\gamma.
	\]
	Since $M>4^{1/\gamma}C_1$, this implies
	\[
		K-1\geq \frac{1}{2}C_1^{-\gamma}M^\gamma\geq1.
	\]
	Therefore, using the upper bound in \eqref{eq:tail},
	\[
		\frac{\left\lvert\bigcup_{i=K-1}^{\infty}J_{\tau i}\right\rvert}{\lvert J\rvert}
		\leq
		2^{1/\gamma}C_1^2M^{-1}.\qedhere
	\]
\end{proof}

\begin{lem}[Tail crossing criterion]\label{lem:tail-crossing-criterion}
	Let $J=J_\tau$, and let $K\geq2$ satisfy
	\[
		\Tau_\theta(J)
		\subset
		\bigcup_{i=K}^{\infty}J_{\tau i}.
	\]
	Let $B$ be a closed interval such that
	\[
		B\cap\Tau_\theta(J)\setminus\mathcal{C}_F\neq\emptyset
	\]
	and
	\[
		\lvert B\rvert>
		\left\lvert\bigcup_{i=K-1}^{\infty}J_{\tau i}\right\rvert.
	\]
	Then $B$ contains a child of $J$.
\end{lem}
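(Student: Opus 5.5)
We argue by contradiction, using only the linear order of the children $J_{\tau i}$ inside $J$. By the hypothesis on $K$, the tail $\Tau_\theta(J)$ is contained in $T_K\coloneqq\bigcup_{i\ge K}J_{\tau i}$. The set $T_K$ is the subinterval of $J$ with endpoint $e_q(J)$ whose other endpoint is the common endpoint of $J_{\tau(K-1)}$ and $J_{\tau K}$. Likewise, $T_{K-1}\coloneqq\bigcup_{i\ge K-1}J_{\tau i}$ is the subinterval of $J$ sharing the endpoint $e_q(J)$ and containing $J_{\tau(K-1)}$ adjacent to $T_K$. Suppose, toward a contradiction, that $B$ contains no child of $J$.

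\emph{Step 1.} We show that $B\cap J\subset T_{K-1}$. The hypothesis $B\cap\Tau_\theta(J)\setminus\mathcal C_F\neq\emptyset$ gives a point $y\in B\cap T_K$ in the interior of some child $J_{\tau i}$ with $i\ge K$. If $B$ also met a point of $J$ outside $T_{K-1}$, that point would lie in some $J_{\tau j}$ with $j\le K-2$. Since $B$ is connected, $B$ would then contain every child strictly between $J_{\tau j}$ and $J_{\tau i}$, and in particular the whole child $J_{\tau(K-1)}$, because $j<K-1<i$. This contradicts the assumption, so $B\cap J\subset T_{K-1}$.

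\emph{Step 2.} We show that $B$ does not extend past $e_q(J)$ beyond $J$. If $B$ contained $e_q(J)$ together with a point of the interior of $J$ near it, then $B$ would contain all children $J_{\tau i}$ with $i$ large, since these accumulate at $e_q(J)$. Again this contradicts the assumption. Hence $B$ cannot contain a neighborhood of $e_q(J)$ relative to $J$, so $B$ does not contain $e_q(J)$ while also meeting $J$ on both sides of it. Since $B$ meets the interior of $J$ at $y$, every point of $B$ on the far side of $e_q(J)$ would force $e_q(J)\in B$ and, by connectivity, the whole segment from $e_q(J)$ to $y$, which contains large-index children. Combined with Step 1 and connectivity, this gives $B\subset T_{K-1}$.

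\emph{Conclusion.} Step 2 yields $\lvert B\rvert\le\lvert T_{K-1}\rvert$, contradicting the size hypothesis. Therefore $B$ contains a child of $J$.

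The main care lies in Step 2, namely the exit of $B$ through $e_q(J)$. One must note that any interval containing $e_q(J)$ and a point of the interior of $J$ swallows infinitely many children. The endpoints in $\mathcal C_F$ are handled by choosing $y$ in the interior of a child, so that the squeezing argument in Step 1 is strict.
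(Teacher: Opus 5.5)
Your route is essentially the paper's: order the children toward $e_q(J)$, note that an interval through $e_q(J)$ and an interior point of $J$ swallows all high-index children, then compare $\lvert B\rvert$ with $\lvert\bigcup_{i\ge K-1}J_{\tau i}\rvert$. However, the sentence ``combined with Step 1 and connectivity, this gives $B\subset T_{K-1}$'' does not follow. Step 1 only constrains points of $B$ that lie in $J$, and Step 2 only rules out $B$ leaving $J$ through $e_q(J)$. Nothing rules out $B$ leaving $J$ through the opposite endpoint $e_{\mathrm{opp}}(J)$.

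For $K\ge3$ you are covered, because $e_{\mathrm{opp}}(J)\in J_{\tau1}$ is then a point of $J$ outside $T_{K-1}$, and Step 1 applies to it. The statement, however, allows $K=2$. Then $T_1=J\setminus\{e_q(J)\}$ and Step 1 is vacuous. Worse, $\lvert B\rvert>\lvert T_1\rvert=\lvert J\rvert$ together with $e_q(J)\notin B$ \emph{forces} $B$ to protrude past $e_{\mathrm{opp}}(J)$. So for $K=2$ the inclusion $B\subset T_{K-1}$ is false, and your contradiction is never reached.

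The fix is one line. Apply the Step 1 argument to any point $z\in B$ on the far side of $J_{\tau(K-1)}$ from $e_q(J)$, whether or not $z\in J$. Since $y$ is interior to some $J_{\tau i}$ with $i\ge K$, the segment between $z$ and $y$ contains $J_{\tau(K-1)}$ (for $K=2$, this is $J_{\tau1}$).

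Phrased this way, your proof is exactly the paper's direct endpoint argument. Once $e_q(J)\notin B$, the endpoint of $B$ nearer $e_q(J)$ lies in $\bigcup_{i\ge K}J_{\tau i}$. The other endpoint cannot lie in $\bigcup_{i\ge K-1}J_{\tau i}$ by the length hypothesis, so it lies beyond $J_{\tau(K-1)}$ (possibly outside $J$), and hence $J_{\tau(K-1)}\subset B$.

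The gap does not affect the paper's uses of the lemma, where the enlarged tail estimate forces $K\ge3$. It does affect the lemma as stated.
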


\begin{proof}
	If $e_q(J)\in B$, then, since $B\cap\Tau_\theta(J)$ contains a point not belonging to $\mathcal{C}_F$, the interval $B$ contains $J_{\tau i}$ for all sufficiently large $i$, and we are done. Thus suppose $e_q(J)\notin B$.

	Then both endpoints of $B$ lie on the same side of $e_q(J)$. Let $e_1(B)$ be the endpoint of $B$ closer to $e_q(J)$, and let $e_2(B)$ be the other endpoint. Since $B$ meets $\Tau_\theta(J)$ and
	\[
		\Tau_\theta(J)
		\subset
		\bigcup_{i=K}^{\infty}J_{\tau i},
	\]
	we have
	\[
		e_1(B)\in\bigcup_{i=K}^{\infty}J_{\tau i}.
	\]
	The endpoint $e_2(B)$ cannot lie in
	\[
		\bigcup_{i=K-1}^{\infty}J_{\tau i},
	\]
	for otherwise $B$ would be contained in this enlarged tail, contradicting the assumed length inequality. Hence $e_2(B)$ lies on the opposite side of $J_{\tau(K-1)}$ from $e_1(B)$. Therefore $B$ contains $J_{\tau(K-1)}$.
\end{proof}

The next lemma is the key estimate used in Proposition \ref{prop:colored-winning} below: supported cylinders are never too large relative to Bob's interval, and tail intersections force the next generation to appear inside Bob's interval.

\begin{lem}[Supported c.w.g.\ estimate]\label{lem:supported-cwg}
	Fix $0<\beta<1/3$. Let $M>1$ satisfy
	\begin{equation}\label{eq:M-supported-cwg}
		M>
		\max\{
		2^{1+1/\gamma}C_1^2\beta^{-2},\,
		2^{1+2/\gamma}C_1^4\beta^{-1}
		\}.
	\end{equation}
	Set
	\[
		\theta=M^{-1}.
	\]
	Suppose $B\subset B^{-}$ are closed intervals with
	\[
		\lvert B\rvert\geq \beta\lvert B^{-}\rvert
	\]
	and that $B$ is c.w.g.\ $g'\geq2$.
	\begin{enumerate}[label=(\roman*)]
		\item\label{it:supported-size} If $J\in G_h$ and $J$ is supported by $B^{-}$ at a generation $s$ with $1\leq s\leq h$, then
		      \[
			      \lvert B\rvert \geq 2^{-1-1/\gamma}\beta C_1^{-2}\lvert J\rvert.
		      \]

		\item\label{it:supported-crossing} If $J \in G_h$ with $1\leq h\leq g'-2$, and $J$ is supported by $B^{-}$ at a generation $s$ with $1\leq s\leq h$, and if $B\cap\Tau_\theta(J)$ contains a point not in $\mathcal{C}_F$, then $B$ contains an element of $G_{h+1}$.
	\end{enumerate}
\end{lem}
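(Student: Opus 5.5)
The plan is to get part (i) from the child-size comparison \eqref{eq:child} inside the support tail, and then to get part (ii) by combining (i) with Lemma~\ref{lem:enlarged-tail-estimate} and Lemma~\ref{lem:tail-crossing-criterion}.

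\emph{Part (i).} Write $J=J_{\sigma a\rho}$ with $\lvert\sigma\rvert=s-1$, where $J_{\sigma a}\in\mathcal A_{B^-}(J_\sigma)$. Put $a_0=a_0(J_\sigma)$, so that $J_{\sigma a_0}\subset B^-$, and note $a\ge b_0(J_\sigma)$.
\begin{itemize}
\item We have $a\ge a_0/2$. If $a_0=1$ this is trivial. If $a_0\ge2$, then $a\ge a_0-1\ge a_0/2$.
\item By \eqref{eq:child}, applied once from above and once from below,
\[
\lvert J\rvert\le\lvert J_{\sigma a}\rvert\le C_1a^{-1-1/\gamma}\lvert J_\sigma\rvert
\le C_1^2\Bigl(\frac{a_0}{a}\Bigr)^{1+1/\gamma}\lvert J_{\sigma a_0}\rvert
\le 2^{1+1/\gamma}C_1^2\lvert J_{\sigma a_0}\rvert .
\]
\item Since $J_{\sigma a_0}\subset B^-$ and $\lvert B^-\rvert\le\beta^{-1}\lvert B\rvert$, this gives $\lvert J\rvert\le 2^{1+1/\gamma}C_1^2\beta^{-1}\lvert B\rvert$, which is exactly (i).
\end{itemize}
The only point needing care is that the first inequality uses $J\subset J_{\sigma a}$, and that the factor $2$ in the exponent comes from $b_0=a_0-1$ possibly being smaller than $a_0$.

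\emph{Part (ii), setting up.} Write $J=J_\tau$ and let $\theta=M^{-1}$. First I check that $M>4^{1/\gamma}C_1$. This follows from the second term of \eqref{eq:M-supported-cwg}, because $2^{1+2/\gamma}C_1^4\beta^{-1}>4^{1/\gamma}C_1$ when $C_1\ge1$ and $\beta<1$. Next I choose $K$ with
\[
\bigcup_{i\ge K+1}J_{\tau i}\subset\Tau_\theta(J)\subset\bigcup_{i\ge K}J_{\tau i}.
\]
Such a $K$ exists because the children $J_{\tau i}$ accumulate monotonically at $e_q(J)$ and $\Tau_\theta(J)$ is the segment of $J$ adjacent to $e_q(J)$. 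Concretely, $K$ is the index of the child containing the inner endpoint of the tail.

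\emph{Part (ii), the length comparison.} This is the main step. Lemma~\ref{lem:enlarged-tail-estimate} gives $K\ge2$ and
\[
\Bigl\lvert\bigcup_{i\ge K-1}J_{\tau i}\Bigr\rvert\le 2^{1/\gamma}C_1^2M^{-1}\lvert J\rvert .
\]
Part (i) gives $\lvert B\rvert\ge 2^{-1-1/\gamma}\beta C_1^{-2}\lvert J\rvert$. So the hypothesis of Lemma~\ref{lem:tail-crossing-criterion},
\[
\lvert B\rvert>\Bigl\lvert\bigcup_{i\ge K-1}J_{\tau i}\Bigr\rvert,
\]
holds as soon as $2^{-1-1/\gamma}\beta C_1^{-2}>2^{1/\gamma}C_1^2M^{-1}$. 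That inequality is $M>2^{1+2/\gamma}C_1^4\beta^{-1}$, which is precisely the second term in \eqref{eq:M-supported-cwg}.

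\emph{Part (ii), conclusion.} The remaining hypothesis of Lemma~\ref{lem:tail-crossing-criterion} is that $B\cap\Tau_\theta(J)$ contains a point outside $\mathcal C_F$, and this is assumed. The lemma therefore yields a child of $J$ contained in $B$, that is, an element of $G_{h+1}$ inside $B$.

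The restriction $h\le g'-2$ is not used in this argument. I expect it only records the regime in which the lemma is applied later: in that regime the conclusion contradicts, or constrains, the c.w.g.\ generation of $B$, since containing an element of $G_{h+1}\subset G_{g'-1}$ is forbidden. I expect the first term of \eqref{eq:M-supported-cwg} to likewise be needed only in that later application.
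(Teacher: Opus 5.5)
Your proposal is correct and follows the paper's proof essentially step for step. Part (i) uses $a\ge b_0\ge a_0/2$ together with the two-sided child estimate, and part (ii) combines (i) with the enlarged tail estimate and the tail crossing criterion under $M>2^{1+2/\gamma}C_1^4\beta^{-1}$. You check that this bound forces $M>4^{1/\gamma}C_1$ (hence $K\ge2$) before invoking the enlarged tail estimate, which is slightly tidier than the paper's ordering. You are also right that neither $h\le g'-2$ nor the first term in the bound on $M$ is used within this lemma.
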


\begin{proof}
	For \ref{it:supported-size}, write
	\[
		J=J_{\sigma a\rho},
		\qquad
		\lvert\sigma\rvert=s-1,
		\qquad
		\lvert\rho\rvert=h-s.
	\]
	By Definition~\ref{def:supported-cylinder},
	\[
		J_{\sigma a}\in \mathcal A_{B^{-}}(J_\sigma).
	\]
	In particular, $S_{B^{-}}(J_\sigma)\neq\emptyset$. Put
	\[
		a_0\coloneqq a_0(J_\sigma),
		\qquad
		b_0\coloneqq b_0(J_\sigma).
	\]
	By the definition of $a_0$,
	\[
		J_{\sigma a_0}\subset B^{-}.
	\]
	By the definition of $\mathcal A_{B^{-}}(J_\sigma)$, the child index $a$ satisfies
	\[
		a\geq b_0.
	\]
	Since either \(b_0=a_0=1\) or \(b_0=a_0-1\), we have
	\[
		\frac{a_0}{a}\leq \frac{a_0}{b_0}\leq 2 .
	\]
	By the child estimate \eqref{eq:child}, applied to the two children \(J_{\sigma a_0}\) and \(J_{\sigma a}\), we get
	\[
		\lvert J_{\sigma a_0}\rvert
		\geq C_1^{-1}a_0^{-1-1/\gamma}\lvert J_\sigma\rvert
	\]
	and
	\[
		\lvert J_{\sigma a}\rvert
		\leq C_1a^{-1-1/\gamma}\lvert J_\sigma\rvert .
	\]
	Therefore
	\[
		\frac{\lvert J_{\sigma a_0}\rvert}{\lvert J_{\sigma a}\rvert}
		\geq
		C_1^{-2}\left(\frac{a_0}{a}\right)^{-1-1/\gamma}
		\geq
		2^{-1-1/\gamma}C_1^{-2}.
	\]
	Since \(J_{\sigma a_0}\subset B^{-}\) and \(\lvert B\rvert\geq \beta\lvert B^{-}\rvert\), it follows that
	\[
		\lvert B\rvert
		\geq
		\beta\lvert J_{\sigma a_0}\rvert
		\geq
		2^{-1-1/\gamma} C_1^{-2} \beta \lvert J_{\sigma a}\rvert .
	\]
	Finally, since
	\[
		J=J_{\sigma a\rho}\subset J_{\sigma a},
	\]
	we obtain
	\[
		\lvert B\rvert
		\geq
		2^{-1-1/\gamma}\beta C_1^{-2}\lvert J\rvert.
	\]

	For \ref{it:supported-crossing}, write $J=J_\tau$. Choose $K\geq1$ so that
	\[
		\bigcup_{i=K+1}^{\infty}J_{\tau i}
		\subset
		\Tau_\theta(J)
		\subset
		\bigcup_{i=K}^{\infty}J_{\tau i}.
	\]
	By Lemma~\ref{lem:enlarged-tail-estimate},
	\[
		\left\lvert\bigcup_{i=K-1}^{\infty}J_{\tau i}\right\rvert
		\leq
		2^{1/\gamma}C_1^2M^{-1}\lvert J\rvert.
	\]
	By \ref{it:supported-size},
	\[
		\lvert B\rvert
		\geq
		2^{-1-1/\gamma}\beta C_1^{-2}\lvert J\rvert.
	\]
	Since
	\[
		M>2^{1+2/\gamma}C_1^4\beta^{-1},
	\]
	we have
	\[
		2^{-1-1/\gamma}\beta C_1^{-2}
		>
			2^{1/\gamma}C_1^2M^{-1}.
	\]
	Therefore
	\[
		\lvert B\rvert
		>
		\left\lvert\bigcup_{i=K-1}^{\infty}J_{\tau i}\right\rvert.
	\]
	Note that
	\begin{equation*}
		M > 2^{1+2/\gamma} C_1^4 \beta^{-1} > 4^{1/\gamma}C_1
	\end{equation*}
	so that Lemma~\ref{lem:tail-crossing-criterion} now implies that $B$ contains a child of $J$, which is an element of $G_{h+1}$.
\end{proof}

We are ready to prove that Alice has a winning strategy for the absolute game played on the color-sensitive exceptional sets $W_\varepsilon(q)$ defined at the start of this section.

\begin{prop}\label{prop:colored-winning}
	For each $\varepsilon\in\{0,1\}$, the set $W_\varepsilon(q)$ is absolute winning in $[r_1,1]$.
\end{prop}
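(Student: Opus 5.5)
Fix $\beta\in(0,1/3)$ and $\varepsilon$. Choose $M$ satisfying \eqref{eq:M-supported-cwg} and also large enough that $M^{-1}$ is small relative to $\beta$ times the size constants below, and set $\theta=M^{-1}$. Alice's strategy, at each turn with Bob's interval $B_n$ c.w.g.\ $g_n$, is to delete the tail $\Tau_\theta(J)$ of at most one ``dangerous'' cylinder $J$ of color $\varepsilon$. Before Bob's interval becomes small (say c.w.g.\ at most $2$), she makes arbitrary legal deletions; the generation $N$ in the definition of $W_\varepsilon(q)$ will absorb this. The goal is to show that if $\omega=\bigcap B_n\notin\mathcal C_F$, then $\omega\notin\Tau_\theta(J)$ for every color-$\varepsilon$ cylinder $J$ of generation $h\ge N$. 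Since $\mathcal C_F$ is countable, Theorem~\ref{thm:winningproperties} lets us ignore it.

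The key bookkeeping is that each cylinder $J$ whose tail $\omega$ could enter must, at some stage, be the unique dangerous color-$\varepsilon$ cylinder with $\Tau_\theta(J)$ of length at most $\beta|B_n|$, so that Alice can delete it. I would proceed as follows.

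First, suppose $B_n\cap\Tau_\theta(J)\setminus\mathcal C_F\ne\emptyset$ with $h\ge g_n$. By Lemma~\ref{lem:support-or-size}, applied to $B_n$, either $J$ is supported by $B_n$ at some generation, or $|B_n|\ge(1-\theta)|J|$. In the supported case, Lemma~\ref{lem:supported-cwg}\ref{it:supported-size}, applied to the pair $B_{n+1}\subset B_n$, gives $|B_{n+1}|\gtrsim\beta|J|$. Since $\theta|J|\ll\beta|B_{n+1}|$ by the choice of $M$, the tail is small enough to delete. In the large case, $|\Tau_\theta(J)|\le\theta(1-\theta)^{-1}|B_n|\le\beta|B_n|$.

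Second, I need to show that at each time there is at most one cylinder that must be deleted. The plan is to assign $J$ to the first time $n$ at which $h$ is within the window $g_n-2<h\le g_n$, or so. By Lemma~\ref{lem:supported-cwg}\ref{it:supported-crossing}, if $h\le g_n-2$ and $J$ is supported by $B_{n-1}$ while $B_n$ still meets its tail, then $B_n$ would contain an element of $G_{h+1}$. That contradicts $B_n$ being c.w.g.\ $g_n$ only if $h+1<g_n$, which is a contradiction as stated. So it yields that tails of old supported cylinders cannot still be met. Hence the only dangerous cylinders at step $n$ have generation $g_n-1$ or $g_n$, roughly.

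A c.w.g.\ $g$ interval meets at most two elements of $G_{g-1}$. Because of the coloring, at most one of them has color $\varepsilon$. The same holds at generation $g$ by a similar count, using the smallness of $\theta$-tails relative to $B_n$. Alice deletes the tail of that unique color-$\varepsilon$ cylinder, possibly over two consecutive turns, since $g_{n+1}\ge g_n$ and the generation increases by bounded amounts each turn (as $|B_{n+1}|\ge\beta|B_n|$ and cylinder sizes shrink geometrically by distortion).

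The main obstacle is this last uniqueness/scheduling step. Bob's intervals can stay commensurate with the same generation for many turns, or jump several generations at once. Alice must then handle every color-$\varepsilon$ cylinder in the skipped generations. My first attempt would be to show, via the support-or-size dichotomy, that skipped-generation cylinders whose tails Bob's interval still meets are all nested along one ancestor chain. Their tails would then share the endpoint $e_q$ in a nested way, and deleting the largest admissible one, of length at most $\beta|B_n|$, covers all the others. Deleting a tail is interpreted as deleting the closed interval $\Tau_\theta(J)$, so the limit point $\omega$ avoids it up to endpoints, which lie in $\mathcal C_F$. Finally, taking $N=3$ and invoking Theorem~\ref{thm:winningproperties} for the countable set $\mathcal C_F$ gives that $W_\varepsilon(q)$ is absolute winning.
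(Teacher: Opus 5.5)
Your ingredients are the right ones: the coloring, Lemma~\ref{lem:support-or-size} and Lemma~\ref{lem:supported-cwg}. But the scheduling step, which you yourself flag as the main obstacle, is left unresolved, and the claims you offer for it are false.

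\emph{No uniqueness at generation $g_n$.} Take $z=e_q(J_\sigma)$ for a suitable $J_\sigma\in G_{g-1}$, $g\ge2$, and $d$ small. The interval $[z-d,z+d]$ is c.w.g.\ $g$, yet it contains all children $J_{\sigma k}$ with $k$ large. These are alternately colored, so it contains the tails of infinitely many $\varepsilon$-colored elements of $G_g$.

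\emph{The generation does not grow by a bounded amount per turn.} This is exactly the unbounded geometry of $F$. In the increasing case, $z$ is also $e_{\mathrm{opp}}(J_{\sigma'})$ for the left neighbour $J_{\sigma'}\in G_{g-1}$. Let $m$ be maximal with $\lvert J_{\sigma'1^m}\rvert\ge d$. A subinterval of length $2\beta d$ inside $[z-d,z]$ lies in $J_{\sigma'1^m}$, so it is c.w.g.\ at least $g+m$, and $m\to\infty$ as $d\to0$.

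\emph{The nested-tails repair fails.} One has $e_q(J_{\sigma a})\neq e_q(J_\sigma)$; for instance $\Tau_\theta(J_{\sigma1})$ lies at distance comparable to $\lvert J_\sigma\rvert$ from $\Tau_\theta(J_\sigma)$. So tails along an ancestor chain are not nested, and one deletion does not cover them.

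\emph{Fixed $\theta$ with $N=3$ cannot work.} Bob may open inside $\Tau_\theta(J)$ for an $\varepsilon$-colored $J\in G_3$. $N$ must depend on Bob's opening; the paper takes $N=g_1-1$ and also lets $\theta$ depend on $B_{n_1}$ through $M'$.

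The missing idea is that skipped generations need no deletion at all. Alice acts only at the first time $n_k$ at which Bob's c.w.g.\ strictly increases, from $g_{k-1}$ to $g_k$. She applies Lemma~\ref{lem:support-or-size} to the predecessor $B^-=B_{n_k-1}$, which is still c.w.g.\ $g_{k-1}$, for every $J$ of generation $h\ge g_{k-1}$.

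Suppose $g_{k-1}\le h\le g_k-2$ and $B_{n_k}$ meets $\Tau_\theta(J)$ off $\mathcal C_F$. Then both alternatives force $B_{n_k}$ to contain an element of $G_{h+1}$, contradicting c.w.g.\ $g_k$:
\begin{itemize}
\item the supported alternative does so via Lemma~\ref{lem:supported-cwg}\ref{it:supported-crossing}, which you found;
\item the size alternative $\lvert B^-\rvert\ge(1-\theta)\lvert J\rvert$ does so via Lemmas~\ref{lem:enlarged-tail-estimate} and~\ref{lem:tail-crossing-criterion}, which you omit.
\end{itemize}

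Hence at time $n_k$ the only tail to delete is that of the at most one $\varepsilon$-colored element of $G_{g_k-1}$ meeting $B_{n_k}$; this is where the coloring enters. Its length is controlled by the same dichotomy, exactly as in your first step. Cylinders of generation $\ge g_k$ are deferred to the next stopping time, and between stopping times Alice simply makes maximal deletions. This is what the invariants $P_1$--$P_3$ in the paper's proof record.
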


\begin{proof}
	We will target the set $W_\varepsilon(q) \cup \mathcal{C}_F$. Once we show it is absolute winning, the result follows immediately.
	
	Fix $0<\beta<1/3$ and $\varepsilon\in\{0,1\}$. We describe Alice's $\beta$-absolute strategy by induction. First we state the induction hypotheses. They are formulated at times immediately after Bob has chosen $B_{n_j}$, before Alice has responded.

	\medskip
	\noindent\textbf{The induction hypothesis.}
	Suppose there exist $\theta>0$, an integer $k\geq2$, and integers
	\begin{equation*}
		1\leq n_1<n_2<\cdots<n_{k-1}
	\end{equation*}
	and
	\begin{equation*}
		1\leq g_1<g_2<\cdots<g_{k-1}
	\end{equation*}
	such that the players have chosen $\{ B_i \}_{i=1}^{n_{k-1}}$ and $\{ A_i \}_{i=1}^{n_{k-1}-1}$, where all of Alice's deleted intervals $A_i$ are of maximal length $\lvert A_i \rvert = \beta \lvert B_i \rvert$. Suppose further that for all $1\leq j \leq k-1$, Bob's interval $B_{n_j}$ satisfies:
	\begin{enumerate}[label=$P_{\arabic*}(j)$:]
		\item\label{it:P1} $B_{n_j}$ is c.w.g.\ $g_j$;
		\item\label{it:P2} For every $\varepsilon$-colored cylinder $I\in G_{g_j-1}$,
		      \begin{equation*}
			      \lvert B_{n_j}\cap\Tau_\theta(I)\rvert\le \beta\lvert B_{n_j}\rvert;
		      \end{equation*}
		\item\label{it:P3} For every $\varepsilon$-colored cylinder $I\in\bigcup_{g=g_1-1}^{g_j-2}G_g$,
		      \begin{equation*}
			      B_{n_j}\cap \Tau_{\theta}(I) \subset \mathcal{C}_F.
		      \end{equation*}
	\end{enumerate}

	\medskip
	\noindent\textbf{The initial step.}
	Bob chooses $B_1$. If $B_1 \neq [r_1,1]$ define $n_1 \coloneqq 1$. If $B_1=[r_1,1]$, define $n_1 \coloneqq 2$; Alice deletes the concentric subinterval of $B_1$ of length $\beta \lvert B_1 \rvert$ and waits for Bob's next interval $B_2$. In either case $B_{n_1}$ is c.w.g.\ $g_1 \geq 1$, confirming $P_1(1)$.

	Choose $M'>1$ so large that
	\begin{equation}\label{eq:init-D0}
		\lvert I\rvert
		\leq
		M'\lvert B_{n_1}\rvert
	\end{equation}
	for every $I\in G_{g_1-1}$ intersecting $B_{n_1}$. This is possible since $B_{n_1}$ intersects at most two elements of $G_{g_1-1}$. Next, choose $M>1$ satisfying
	\begin{equation}\label{eq:d2choice}
		M
		>
		\max\{
		\beta^{-1}M',\,
		2^{1+1/\gamma}C_1^4\beta^{-2},\,
		2^{1+2/\gamma}C_1^6\beta^{-1},\,
		4^{1/\gamma}C_1
		\}.
	\end{equation}
	Finally set
	\begin{equation}\label{eq:thetachoice}
		\theta \coloneqq M^{-1}.
	\end{equation}
	Observe that for any $\varepsilon$-colored cylinder $I\in G_{g_1-1}$, \eqref{eq:init-D0} and \eqref{eq:d2choice} give
	\[
		\lvert B_{n_1}\cap\Tau_\theta(I)\rvert
		\leq
		\lvert\Tau_\theta(I)\rvert
		=
		M^{-1}\lvert I\rvert
		\leq
		M^{-1} M'\lvert B_{n_1}\rvert
		\leq
		\beta\lvert B_{n_1}\rvert,
	\]
	proving $P_2(1)$. The statement $P_3(1)$ is vacuously true.

	\medskip
	\noindent\textbf{The induction step.}
	Consider the active collection
	\begin{equation*}
		\mathcal I \coloneqq \left\{I\in G_{g_{k-1}-1} \colon \chi(I)=\varepsilon\text{ and } B_{n_{k-1}} \cap \Tau_{\theta}(I)\ne\emptyset\right\}.
	\end{equation*}
	Since $B_{n_{k-1}}$ is c.w.g.\ $g_{k-1}$, it intersects at most two elements of $G_{g_{k-1}-1}$. If it intersects two, they are adjacent and hence have different colors. Therefore
	\begin{equation}\label{eq:one-active-tail}
		\#\mathcal I\le1.
	\end{equation}

	If $\mathcal I=\emptyset$, Alice deletes the concentric subinterval $A_{n_{k-1}}$ of $B_{n_{k-1}}$ of length $\beta \lvert B_{n_{k-1}}\rvert$. If $\mathcal I=\{I\}$, Alice deletes any subinterval of $B_{n_{k-1}}$ of length $\beta \lvert B_{n_{k-1}} \rvert$ containing
	\begin{equation*}
		B_{n_{k-1}}\cap\Tau_{\theta}(I).
	\end{equation*}
	This is possible and legal by $P_2(k-1)$.

	Bob now chooses $B_{n_{k-1}+1}\subset B_{n_{k-1}}\setminus A_{n_{k-1}}$. If $B_{n_{k-1}+1}$ is c.w.g.\ $g>g_{k-1}$, set $n_k \coloneqq n_{k-1}+1$ and $g_k \coloneqq g$. If not, Alice plays the maximal middle-deletion move on each subsequent turn until Bob first chooses an interval $B_{n_k}$ that is commensurate with some generation $g_k > g_{k-1}$.

	Suppose for the moment that this never happens. Since Alice deletes intervals of maximal allowed length on every turn, Bob's intervals shrink to a single point; write $\bigcap_n B_n=\{\omega\}$. If $\omega\notin\mathcal{C}_F$, then some sufficiently high generation cylinder contains $\omega$ in its interior. All sufficiently late Bob intervals are contained in that cylinder, and hence are c.w.g.\ some generation larger than $g_{k-1}$, a contradiction. Thus $\omega\in\mathcal{C}_F\subset W_\varepsilon(q)\cup\mathcal{C}_F$, and Alice wins. Therefore we may disregard this case.

	\medskip
	\noindent\textbf{Propagation of the induction hypothesis.}
	We now check that $B_{n_k}$ satisfies $P_1(k)$--$P_3(k)$.

	The first condition $P_1(k)$ is true by construction. Set
	\[
		B^{-}\coloneqq B_{n_k-1},
		\qquad
		g\coloneqq g_{k-1}.
	\]
	By the minimal choice of $n_k$, the interval $B^{-}$ is c.w.g.\ $g$, while $B_{n_k}$ is c.w.g.\ $g_k>g$. Also
	\[
		B_{n_k}\subset B^{-},
		\qquad
		\lvert B_{n_k}\rvert\geq \beta\lvert B^{-}\rvert.
	\]

	For $P_2(k)$, let $J\in G_{g_k-1}$ be $\varepsilon$-colored. If
	\[
		B_{n_k}\cap\Tau_\theta(J)\subset\mathcal{C}_F,
	\]
	then $P_2(k)$ is immediate. Otherwise
	\[
		B_{n_k}\cap\Tau_\theta(J)\setminus\mathcal{C}_F\neq\emptyset.
	\]
	Since $B_{n_k}\subset B^{-}$, Lemma~\ref{lem:support-or-size}, applied to $B=B^{-}$, gives two alternatives.

	Suppose that the first alternative in Lemma~\ref{lem:support-or-size} holds: that there is an integer $s$ with
	\[
		g\leq s\leq g_k-1
	\]
	such that $J$ is supported by $B^{-}$ at generation $s$. Lemma~\ref{lem:supported-cwg}, applied with $B=B_{n_k}$, $g'=g_k$, and this value of $s$, gives
	\begin{equation*}
		\lvert B_{n_k}\rvert
		\geq
		2^{-1-1/\gamma}\beta C_1^{-2}\lvert J\rvert.
	\end{equation*}
	Since $M>2^{1+1/\gamma}C_1^2\beta^{-2}$ and $\theta=M^{-1}$, it follows that
	\[
		\lvert B_{n_k}\cap\Tau_\theta(J)\rvert
		\leq
		\theta\lvert J\rvert
		=
		M^{-1}\lvert J\rvert
		<
		\beta\lvert B_{n_k}\rvert.
	\]

	Now suppose instead that the second alternative in Lemma~\ref{lem:support-or-size} holds. Then
	\[
		\lvert B^{-}\rvert\geq (1-\theta)\lvert J\rvert,
	\]
	and hence
	\[
		\lvert B_{n_k}\rvert\geq \beta(1-\theta)\lvert J\rvert.
	\]
	Since $M>2\beta^{-2}$ and $\theta=M^{-1}$, we have $\theta<\beta^2(1-\theta)$. Therefore
	\[
		\lvert B_{n_k}\cap\Tau_\theta(J)\rvert
		\leq
		\theta\lvert J\rvert
		<
		\beta^2(1-\theta)\lvert J\rvert
		\leq
		\beta\lvert B_{n_k}\rvert.
	\]
	Thus $P_2(k)$ holds.

	It remains to prove $P_3(k)$. Let
	\begin{equation*}
		J\in\bigcup_{h=g_1-1}^{g_k-2}G_h
	\end{equation*}
	be an $\varepsilon$-colored cylinder. We must show
	\begin{equation*}
		B_{n_k}\cap\Tau_{\theta}(J)\subset\mathcal{C}_F.
	\end{equation*}
	There are three cases.

	First suppose $J$ has generation at most $g_{k-1}-2$. Then the conclusion follows from $P_3(k-1)$ and nesting, because $B_{n_k}\subset B_{n_{k-1}}$.

	Second suppose $J\in G_{g_{k-1}-1}$. If $B_{n_{k-1}}$ did not meet $\Tau_{\theta}(J)$, then nesting again gives the conclusion. If $B_{n_{k-1}}$ did meet $\Tau_{\theta}(J)$, then $J$ was the unique active $\varepsilon$-colored cylinder at stage $k-1$, by \eqref{eq:one-active-tail}. Alice deleted an interval containing
	\begin{equation*}
		B_{n_{k-1}}\cap\Tau_{\theta}(J),
	\end{equation*}
	so all later Bob intervals, including $B_{n_k}$, are disjoint from this tail.

	Third suppose $J\in G_h$ with
	\begin{equation*}
		g_{k-1}\le h\le g_k-2.
	\end{equation*}
	Suppose, toward a contradiction, that
	\[
		B_{n_k}\cap\Tau_\theta(J)\setminus\mathcal{C}_F\neq\emptyset.
	\]
	Since $B_{n_k}\subset B^{-}$, Lemma~\ref{lem:support-or-size}, applied to $B=B^{-}$, gives two alternatives.

	If there is an integer $s$ with
	\[
		g\leq s\leq h
	\]
	such that $J$ is supported by $B^{-}$ at generation $s$, then Lemma~\ref{lem:supported-cwg}, applied with $B=B_{n_k}$, $g'=g_k$, and this value of $s$, implies that $B_{n_k}$ contains an element of $G_{h+1}$.

	If instead
	\[
		\lvert B^{-}\rvert\geq (1-\theta)\lvert J\rvert,
	\]
	then
	\[
		\lvert B_{n_k}\rvert\geq \beta(1-\theta)\lvert J\rvert.
	\]
	Write $J=J_\tau$, and choose $K\geq1$ so that
	\[
		\bigcup_{i=K+1}^{\infty}J_{\tau i}
		\subset
		\Tau_\theta(J)
		\subset
		\bigcup_{i=K}^{\infty}J_{\tau i}.
	\]
	By Lemma~\ref{lem:enlarged-tail-estimate},
	\[
		\left\lvert\bigcup_{i=K-1}^{\infty}J_{\tau i}\right\rvert
		\leq
		2^{1/\gamma}C_1^2M^{-1}\lvert J\rvert.
	\]
	Since $M>2^{1+2/\gamma}C_1^4\beta^{-1}$, while $C_1\geq1$ and $\theta<1/2$, we have
	\[
		2^{1/\gamma}C_1^2M^{-1}
		<
		\beta(1-\theta).
	\]
	Therefore
	\[
		\lvert B_{n_k}\rvert
		>
		\left\lvert\bigcup_{i=K-1}^{\infty}J_{\tau i}\right\rvert.
	\]
	Lemma~\ref{lem:tail-crossing-criterion} implies that $B_{n_k}$ contains a child of $J$, again an element of $G_{h+1}$.

	In either alternative, $B_{n_k}$ contains an element of $G_{h+1}$. Since $h+1\le g_k-1$, this contained cylinder contains an element of $G_{g_k-1}$, contradicting that $B_{n_k}$ is c.w.g.\ $g_k$. Therefore
	\[
		B_{n_k}\cap\Tau_\theta(J)\setminus\mathcal{C}_F=\emptyset,
	\]
	and hence
	\begin{equation*}
		B_{n_k}\cap\Tau_\theta(J)\subset\mathcal{C}_F.
	\end{equation*}
	This proves $P_3(k)$ and completes the induction step.

	\medskip
	\noindent\textbf{The outcome of the game.}
	Because Alice deletes intervals of maximal allowed length on every turn, Bob's intervals shrink to a single point. Write
	\begin{equation*}
		\{\omega\}=\bigcap_{n=1}^{\infty}B_n.
	\end{equation*}
	If $\omega \in \mathcal{C}_F \subset W_\varepsilon(q) \cup \mathcal{C}_F$ then Alice wins, so assume $\omega \notin \mathcal{C}_F$.
	
	By construction, the stopping generations $g_j$ tend to infinity. Given any $\varepsilon$-colored cylinder $I$ of generation at least $g_1-1$, choose $j$ so large that
	\begin{equation*}
		I\in\bigcup_{h=g_1-1}^{g_j-2}G_h.
	\end{equation*}
	Then $P_3(j)$ gives
	\begin{equation*}
		B_{n_j}\cap\Tau_\theta(I)\subset\mathcal C_F.
	\end{equation*}
	Now $\omega \notin \mathcal{C}_F$ implies $\omega \notin B_{n_j}\cap\Tau_\theta(I)$. But $\omega \in B_{n_j}$ and thus
	\begin{equation*}
		\omega\notin\Tau_\theta(I).
	\end{equation*}
	Therefore
	\begin{equation*}
		\omega\in W_\varepsilon(q) \subset W_\varepsilon(q)\cup\mathcal{C}_F
	\end{equation*}
	and so Alice wins the $\beta$-absolute game for the target set $W_\varepsilon(q) \cup \mathcal{C}_F$. Since $\beta<1/3$ was arbitrary, $W_\varepsilon(q)\cup\mathcal{C}_F$, and hence $W_\varepsilon(q)$, is absolute winning.
\end{proof}

With this we can now prove the main theorem of this section: $\mathcal{E}_F(q)$ is absolute winning in $[r_1,1]$.

\begin{proof}[Proof of Theorem \ref{thm:Efqwinning}]
	By Proposition \ref{prop:colored-winning}, both $W_0(q)$ and $W_1(q)$ are absolute winning. Hence their intersection is absolute winning. If $x$ lies in this intersection then $x$ avoids all sufficiently deep $q$-tails with some common positive parameter $\theta$. Thus Lemma \ref{lem:tail-to-exceptional} gives
	\begin{equation*}
		W_0(q)\cap W_1(q)\subset\EFq\cup\mathcal{C}_F.
	\end{equation*}
	Therefore $\EFq\cup\mathcal{C}_F$, and hence $\EFq$, is absolute winning.
\end{proof}

\section{All targets for the induced map}\label{sec:all-targets-F}

We now use the finite-branch theorem of Hu--Li--Yu to pass from the distinguished endpoint target $q$ to arbitrary targets in the inducing base.

Let
\begin{equation*}
	Y\coloneqq [r_1,1]
\end{equation*}
be the inducing base. For $K\ge1$, recall
\begin{equation*}
	U_K=\{q\}\cup\bigcup_{i=K}^\infty J_i.
\end{equation*}
Thus $U_K^\circ$ is the one-sided tail neighborhood of $q$ in $Y$. When $K \geq 2$, $Y\setminus U_K^\circ$ is the union of finitely many first-generation branches.

For each $K\ge 2$, define a finite-branch map
\begin{equation*}
	F_K:Y\to Y
\end{equation*}
as follows. On each first-generation branch $J_i$ with $i<K$, set $F_K=F$. On the tail interval $U_K^\circ \setminus\{q\}$, let $F_K$ be an affine homeomorphism from $U_K^\circ$ onto $(r_1,1)$. The affine branch may be chosen in either orientation; only expansion and finite branching are relevant. Finally, set $F_K(q) = F(q)$.

Since $F$ is uniformly expanding and the affine branch of $F_K$ has constant slope $\lvert Y\rvert/\lvert U_K\rvert>1$, each $F_K$ is a finite-branch piecewise $C^2$ expanding interval map. Therefore Theorem \ref{thm:HLY} gives the following.

\begin{lem}\label{lem:FK-absolute}
	For every $K\ge 2$ and every $\xi\in Y$, the set $\mathcal E_{F_K}(\xi)$ is absolute winning in $Y$.
\end{lem}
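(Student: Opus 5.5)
The plan is to check that each $F_K$ satisfies the hypotheses of Theorem~\ref{thm:HLY} on $X=Y=[r_1,1]$, and then to upgrade $1/2$-strong winning to absolute winning using the result of Badziahin--Harrap--Nesharim--Simmons. Both of these facts are already recorded in the statement of Theorem~\ref{thm:HLY}, so the work lies entirely in verifying the hypotheses.

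\emph{Finitely many branches.} When $K\ge2$, the set $Y\setminus U_K^\circ$ is the union of $J_1,\dots,J_{K-1}$. The map $F_K$ therefore has the $K-1$ branches $J_i$, $i<K$, together with the closure of the affine tail branch. This gives $K$ branches in total, and each one maps its interior onto $(r_1,1)$.

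\emph{Regularity.} On each $J_i$ with $i<K$ we have $F_K=F=f^{i}$, since on $J_i$ the map is $f$ applied once on the expanding branch and then $i-1$ times on the neutral branch $(0,r_1)$.

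\begin{itemize}
\item The orbit segment of the interior of $J_i$ stays in the finite union $[r_{i-1},1]$, which is bounded away from $0$.
\item On this set $f$ is $C^2$ by \ref{cond2} and \ref{cond4}, and $f''$ is bounded there by \ref{condlast}.
\item Consequently $F\restriction_{J_i}$ extends to a $C^2$ diffeomorphism of the closed interval $J_i$, and in particular it is $C^{1+\delta}$ for any $\delta\le1$.
\end{itemize}

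The tail branch is affine, so it is trivially $C^{1+\delta}$.

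\emph{Expansion.} On $J_i$, every point of the orbit segment lies in $[r_{i-1},1]$ or in $(r_1,1]$. By \ref{cond3} we have $\inf_{[r_{i-1},1]}|f'|>1$, so $|(F^i)'|$, and hence $|F'|$, is uniformly bounded below by a constant greater than $1$. The affine branch has slope $|Y|/|U_K|>1$, because $U_K$ is a proper subinterval of $Y$. Taking the minimum over finitely many branches gives uniform expansion.

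This is the main point requiring care. Uniform expansion in the strict sense of~\cite{HLY} might require $|(F_K^n)'|\ge c\lambda^n$ with $\lambda>1$, or possibly only expansion for some iterate; either way it follows from the branchwise bound just obtained.

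\emph{Conclusion.} Theorem~\ref{thm:HLY} then yields that $\mathcal E_{F_K}(\xi)$ is $1/2$-strong winning in $Y$ for every $\xi\in Y$. By~\cite{BHNS} it is therefore absolute winning.

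Theorem~\ref{thm:HLY} is phrased for winning in the ambient interval $X$, while~\cite{BHNS} is stated for subsets of $\mathbb R$. If one is worried about this, first extend the set by $\mathbb R\setminus Y$; this extension is $1/2$-strong winning in $\mathbb R$. Then restrict to $Y$ using Lemma~\ref{lem:locality}, or simply by noting that Bob's intervals stay in $Y$.

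The values of $F_K$ at the finitely many branch endpoints and at $q$ are conventions. They affect only the countable set of points whose orbits eventually hit these endpoints, so by Theorem~\ref{thm:winningproperties} they do not change the conclusion.
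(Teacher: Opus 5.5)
Your proposal is correct and follows the same route as the paper. The paper simply observes that $F_K$ is a finite-branch piecewise $C^2$ expanding map, with affine tail branch of slope $\lvert Y\rvert/\lvert U_K\rvert>1$, and then invokes Theorem~\ref{thm:HLY} together with the $1/2$-strong-to-absolute implication; your version spells out the same hypothesis checks in more detail, with one harmless slip: since $f(J_i)=[r_i,r_{i-1}]$, the orbit segment of $J_i$ lies in $[r_i,1]$ rather than $[r_{i-1},1]$, which is still bounded away from $0$, so your regularity and expansion bounds go through unchanged.
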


We now show how these finite-branch approximants allow us to extend Theorem \ref{thm:Efqwinning} to all target points in $Y$.

\begin{thm}\label{thm:all-single-targets-F}
	For every $\xi\in Y$, the set $\mathcal{E}_F(\xi)$ is absolute winning in $Y$.
\end{thm}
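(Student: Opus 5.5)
We will prove that, outside a countable set, the exceptional set $\mathcal{E}_F(\xi)$ is a countable union of sets of the form $\mathcal E_F(q)\cap\mathcal E_{F_K}(\xi)$. Each such set is absolute winning by Theorem~\ref{thm:Efqwinning} and Lemma~\ref{lem:FK-absolute}, together with stability under intersections in Theorem~\ref{thm:winningproperties}. Since supersets of absolute winning sets are absolute winning, it is enough to exhibit one containment, with a countable error.

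\textbf{Case $\xi=q$.} This is exactly Theorem~\ref{thm:Efqwinning}.

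\textbf{Case $\xi\ne q$.}

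1. Fix $K\ge 2$ large enough that $\xi\notin\overline{U_K}$. This is possible because $U_K$ shrinks to $q$ as $K\to\infty$.

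2. Take $x\in\mathcal E_F(q)\cap\mathcal E_{F_K}(\xi)$ that is not a preimage of a branch endpoint. The latter exclusion is harmless: it removes only a countable set, and cocountable subsets of absolute winning sets remain absolute winning. The goal is to show $x\in\mathcal E_F(\xi)$.

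3. Since $x\in\mathcal E_F(q)$, there is a neighborhood of $q$ that the $F$-orbit of $x$ eventually avoids. Enlarging $K$ if necessary, there is $N$ such that $F^n(x)\notin U_K$ for all $n\ge N$. Hence on the tail the orbit stays in branches $J_i$ with $i<K$, where $F_K=F$.

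4. The enlargement of $K$ depends on $x$, so it should not be made pointwise. Instead, I would write
\[
\mathcal E_F(q)\setminus C=\bigcup_{K}\bigcup_N E_{K,N},
\]
where $C$ is a countable set and $E_{K,N}$ consists of the points whose $F$-orbit from time $N$ on avoids $U_K^\circ$. On $E_{K,N}$, the point $y=F^N(x)$ has $F^n(y)=F_K^n(y)$ for all $n$.

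5. This gives the containment
\[
\mathcal E_F(q)\cap\bigcap_{K\ge K_0}\bigcap_{N\ge 0}F^{-N}\mathcal E_{F_K}(\xi)\subset \mathcal E_F(\xi)\cup C.
\]
The argument: for $x$ on the left, pick $(K,N)$ with $x\in E_{K,N}$. Then $y=F^N(x)$ lies in $\mathcal E_{F_K}(\xi)$, so the $F$-orbit of $y$, which coincides with its $F_K$-orbit, avoids a neighborhood of $\xi$. The first $N$ iterates are finitely many points and can be handled separately; if one of them equals $\xi$, then $x$ lies in the countable set $\bigcup_N F^{-N}(\xi)$, which is absorbed into $C$.

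\textbf{Main obstacle: absolute winning of $F^{-N}\mathcal E_{F_K}(\xi)$.} The left side above is a countable intersection, so it is absolute winning once each $F^{-N}\mathcal E_{F_K}(\xi)$ is. This is the main obstacle, because $F^{N}$ has infinitely many branches.

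First attempt: avoid preimages entirely. Note that $\mathcal E_{F_K}(\xi)$ is forward invariant in the sense that $F_K^{N}(x)\in\mathcal E_{F_K}(\xi)$ whenever $x\in\mathcal E_{F_K}(\xi)$. Then try to show directly that
\[
E_{K,N}\cap\mathcal E_{F_K}(\xi)\subset\mathcal E_F(\xi)\cup C.
\]
The iterates of $F$ and $F_K$ differ only at times when the orbit is in $U_K^\circ$, and for $x\in E_{K,N}$ there are finitely many such times, all before time $N$. Those finitely many iterates differ, but the tails do not exactly coincide, because $F_K^N(x)\neq F^N(x)$ in general.

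Fallback: use the locality Lemma~\ref{lem:locality} with $C=\mathcal C_F$. On each open cylinder of generation $N$, the map $F^N$ is a $C^{1+\delta}$ diffeomorphism with bounded distortion onto $Y$, hence bi-Lipschitz and in particular quasisymmetric on compact subintervals. Quasisymmetric invariance then transfers absolute winning of $\mathcal E_{F_K}(\xi)$ to its pullback on each compact subinterval. This yields absolute winning of $F^{-N}\mathcal E_{F_K}(\xi)$, and the theorem follows.
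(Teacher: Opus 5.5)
Your argument is correct in substance, and its core matches the paper's. You intersect $\mathcal E_F(q)$ with $\mathcal E_{F_K}(\xi)$ over \emph{all} $K$, so the $x$-dependence of $K$ is harmless, and you use that $F=F_K$ along any orbit avoiding $U_K^\circ$ and $\mathcal C_F$.

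The detour comes from Step~3. Membership in $\mathcal E_F(q)$ means $q\notin\overline{\{F^n x\colon n\ge 0\}}$, so the orbit avoids a neighborhood of $q$ from time $0$ on, not merely eventually. You may therefore always take $N=0$, and your ``first attempt'' already finishes the proof:
\[
\Bigl(\mathcal E_F(q)\cap\bigcap_{K\ge 2}\mathcal E_{F_K}(\xi)\Bigr)\setminus\mathcal C_F\subset\mathcal E_F(\xi).
\]
This is exactly the paper's proof. It needs no preimages $F^{-N}\mathcal E_{F_K}(\xi)$, no extra countable set $\bigcup_N F^{-N}(\xi)$, and no pullback step. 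What your version buys is robustness: it uses only eventual avoidance of $q$. It would still work if the input were merely that $\{x\colon q\notin\omega_F(x)\}$ is absolute winning. The price is a locality-plus-quasisymmetry argument of the same kind the paper uses in Section~\ref{sec:pullback}.

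If you keep that route, one step must be fixed. Lemma~\ref{lem:locality} requires $C$ to be closed, and $\mathcal C_F$ is dense in $Y$ because cylinder lengths tend to $0$, so it cannot serve as $C$. Take instead $C_N\coloneqq Y\setminus\bigcup_{\lvert\sigma\rvert=N}\mathrm{int}\,J_\sigma$. This set is closed and countable, and its complementary components are exactly the open generation-$N$ cylinders. On each of these, $F^N$ extends to a bi-Lipschitz, hence quasisymmetric, homeomorphism onto $Y$.
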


\begin{proof}
	Fix $\xi\in Y$. By Theorem \ref{thm:Efqwinning}, the set $\EFq$ is absolute winning. By Lemma \ref{lem:FK-absolute}, each $\mathcal E_{F_K}(\xi)$ is absolute winning. Hence
	\begin{equation*}
		S\coloneqq \EFq\cap\bigcap_{K=2}^\infty \mathcal E_{F_K}(\xi)
	\end{equation*}
	is absolute winning, and thus the cocountable subset $S \setminus \mathcal{C}_F$ is also absolute winning.

	Let $x\in S\setminus\mathcal{C}_F$. Since $x\in\EFq$, the $F$-orbit of $x$ avoids some neighborhood of $q$ in $Y$. Thus choose $K\ge 2$ so large that
	\begin{equation*}
		F^n(x)\notin U_K
		\qquad\text{for every }n\ge0.
	\end{equation*}
	Because $x\notin\mathcal{C}_F$, no iterate of $x$ lies on a branch endpoint. Along the whole orbit of $x$, the maps $F$ and $F_K$ therefore agree. Then $x\in\mathcal E_{F_K}(\xi)$, so the $F_K$-orbit, and hence the $F$-orbit of $x$, avoids a relative neighborhood of $\xi$. Therefore $x\in\mathcal E_F(\xi)$ which proves that
	\begin{equation*}
		S \setminus \mathcal{C}_F \subset \mathcal E_F(\xi).
	\end{equation*}
	Thus $\mathcal E_F(\xi)$ is absolute winning.
\end{proof}

Theorem \ref{thm:secondary} now follows immediately from Theorem \ref{thm:all-single-targets-F}.

\begin{proof}[Proof of Theorem \ref{thm:secondary}]
	By Theorem \ref{thm:all-single-targets-F}, each $\mathcal E_F(\xi)$ with $\xi\in Q$ is absolute winning. Therefore
	\begin{equation*}
		\mathcal E_F(Q)=\bigcap_{\xi\in Q}\mathcal E_F(\xi)
	\end{equation*}
	is absolute winning.
\end{proof}

\section{Pulling the result back to the original map}\label{sec:pullback}

We now transfer the induced results to $f$. The transfer uses the invariance of the absolute winning property under quasisymmetric homeomorphisms.

Recall the definition of the decreasing sequence $r_n \searrow 0$ in $[0,1]$:
\begin{equation*}
	r_0 \coloneqq 1, \qquad f(r_{n+1}) = r_n.
\end{equation*}
For each $N\ge 0$, let
\[
h_N:[r_{N+1},r_N]\to[r_1,1]
\]
be the continuous extension of
\[
f^N\restriction_{(r_{N+1},r_N)}:(r_{N+1},r_N)\to(r_1,1),
\]
where $f^0$ is the identity on $[0,1]$.

\begin{lem}\label{lem:branch-quasisym}
	Each map $h_N$ with $N\ge 0$ is quasisymmetric, with a control function depending on $N$.
\end{lem}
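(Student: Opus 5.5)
Because $h_N$ is a homeomorphism between compact intervals, I will get quasisymmetry from a bi-Lipschitz bound. A bi-Lipschitz homeomorphism $h$ with $L^{-1}\lvert x-y\rvert\le\lvert h(x)-h(y)\rvert\le L\lvert x-y\rvert$ is $\eta$-quasisymmetric with $\eta(t)=L^2t$. So it suffices to show that $h_N$ is a $C^1$ diffeomorphism of $[r_{N+1},r_N]$ onto $[r_1,1]$ whose derivative is bounded above and bounded away from zero. The constants will depend on $N$, which accounts for the $N$-dependent control function in the statement. For $N=0$, $h_0$ is the identity on $[r_1,1]$ and there is nothing to prove.

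Now take $N\ge1$. For $x\in(r_{N+1},r_N)$, the iterates $f^j(x)$ with $0\le j\le N-1$ lie in $(r_{N+1-j},r_{N-j})\subset(0,r_1)$. This holds because $f$ maps $(r_{m+1},r_m)$ onto $(r_m,r_{m-1})$ monotonically: on $(0,r_1)$ it is increasing with $f(0)=0$ and $f(r_1^-)=1$ by \ref{condfirst} and \ref{cond3}. By the chain rule,
\[
(f^N)'(x)=\prod_{j=0}^{N-1} f'\bigl(f^j(x)\bigr).
\]
Each factor is evaluated at a point of the closed interval $[r_{N+1-j},r_{N-j}]\subset[r_N,r_1]$. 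This interval is bounded away from $0$ because $r_N>0$. At the endpoint $r_1$ I use the one-sided $C^1$ extension of the left branch given by \ref{cond2}. On such an interval $f'$ is continuous and satisfies $1<f'\le\sup\lvert f'\rvert<\infty$ by \ref{cond3} and \ref{cond4}. Hence
\[
1\le (f^N)'\le \Bigl(\sup_{[0,1]}\lvert f'\rvert\Bigr)^N
\]
on $(r_{N+1},r_N)$. These bounds extend continuously to the closed interval, so $h_N$ is bi-Lipschitz with $L_N=(\sup\lvert f'\rvert)^N$.

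The only point that needs care is the endpoint behavior. At the right endpoint $r_N$, the relevant iterates approach branch points such as $r_1$, where $f$ is discontinuous. I handle this by working on the open interval and using the one-sided extension of the left branch, which is $C^1$ on $[0,r_1]$. The continuous extension $h_N$ is then a monotone bijection onto $[r_1,1]$, and the two-sided derivative bounds pass to the closure by the mean value theorem applied on the interior. The lower bound $1$ does not even require the strict expansion in \ref{cond3}. Applying the quasisymmetry criterion above with $\eta_N(t)=L_N^2\,t$ completes the argument.
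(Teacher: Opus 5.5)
Your proof is correct. It uses the same mechanism as the paper's, namely that two-sided control of $(f^N)'$ on $(r_{N+1},r_N)$ gives quasisymmetry. The difference is where that control comes from. The paper cites Young's bounded distortion estimate for $\log\lvert (f^N)'\rvert$. You instead read off $1\le (f^N)'\le(\sup\lvert f'\rvert)^N$ directly from the chain rule, (MP3) and (MP4), and conclude that $h_N$ is bi-Lipschitz.

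Your bounds already give the $N$-dependent distortion bound $N\log\sup\lvert f'\rvert$ that the paper attributes to Young. So your route shows the citation is not needed for the statement as posed. Young's estimate would only matter if one wanted a control function independent of $N$. The paper never needs that, because Lemma~\ref{lem:locality} handles each component $(r_{N+1},r_N)$ separately. Your constant $L_N$ growing exponentially in $N$ is therefore harmless.

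Two small slips do not affect the argument:
\begin{itemize}
\item The $j=0$ factor is evaluated in $[r_{N+1},r_N]$, so the containing interval is $[r_{N+1},r_1]$, not $[r_N,r_1]$. Staying away from $0$ is irrelevant anyway, since $f'\ge1$ on all of $[0,r_1)$.
\item The one-sided $C^1$ extension of the left branch to $r_1$ comes from the bound on $f''$ in (MP5), not from (MP2). You do not actually need this extension. The mean value theorem on the open interval, together with continuity of $h_N$, already gives the bi-Lipschitz inequality on the closed interval.
\end{itemize}
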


\begin{proof}
	The case $N=0$ is trivial. For $N\ge1$, each $h_N$ is a $C^1$ diffeomorphism on $(r_{N+1},r_N)$ and Young's distortion estimate \cite[Section 6.2]{Young1999} gives a bound, depending on $N$, for
	\begin{equation*}
		\left\lvert\log\frac{\lvert(f^N)'(x)\rvert}{\lvert(f^N)'(y)\rvert}\right\rvert,
		\qquad x,y\in(r_{N+1},r_N).
	\end{equation*}
	A one-dimensional $C^1$ diffeomorphism with bounded derivative distortion on an interval is quasisymmetric.
\end{proof}

The next two lemmas show that exceptional sets under $F$ pull back into exceptional sets under $f$. Let
\begin{equation*}
	\mathcal{C}_f \coloneqq \bigcup_{n=0}^\infty f^{-n}(\{0,1\})
\end{equation*}
denote the countable collection of preimages under $f$ of the endpoint set $\{0,1\}$, equivalently of all branch endpoints. Since \(f(r_1)=0\), \(\mathcal C_f\) contains the sequence $\{r_n\}_{n=0}^\infty$.

\begin{lem}\label{lem:pullback-zero}
	For every $N\ge 0$,
	\begin{equation*}
		h_N^{-1}(\EFq\setminus\mathcal{C}_F) \subset \Ef \cup \mathcal{C}_f.
	\end{equation*}
\end{lem}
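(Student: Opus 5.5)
Fix $N\ge 0$ and a point $x\in[r_{N+1},r_N]$ with $y\coloneqq h_N(x)\in\EFq\setminus\mathcal C_F$; we may also assume $x\notin\mathcal C_f$. The goal is to show that the $f$-orbit of $x$ stays a definite distance from $0$. The plan is to describe the full $f$-orbit of $x$ in terms of the $F$-orbit of $y$ and the excursions between returns to the base $[r_1,1]$. First, $f^N(x)=y$ for $x$ in the open branch (endpoints lie in $\mathcal C_f$). So the first $N+1$ iterates $x,f(x),\dots,f^{N}(x)$ form a finite set of positive points, since $x\notin\mathcal C_f$ implies no iterate equals $0$. A finite set of positive points misses a neighborhood of $0$, so only the iterates from $f^N(x)=y$ onward matter.

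Next, since $y\in\EFq$ and $y\notin\mathcal C_F$, there is $K\ge2$ with $F^n(y)\notin U_K$ for all $n\ge0$. Hence every return time satisfies $\tau(F^n(y))\le K-1$, because $F^n(y)\in J_i$ with $i<K$ and, off the endpoints, $\tau=i$ on the interior of $J_i$. The $f$-orbit of $y$ is then the concatenation of the excursions
\[
F^n(y),\ f(F^n(y)),\ \dots,\ f^{\tau(F^n(y))-1}(F^n(y)),\qquad n\ge0.
\]
In each excursion starting at $z\in J_i$ with $i\le K-1$, the point $f(z)$ lies in $[r_{i-1},r_{i-2}]$-type intervals, namely $f(z)\in f(J_i)$, which is bounded by $r_{i-1}$ and $r_{i-2}$. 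The subsequent points climb through $[r_{j},r_{j-1}]$ for decreasing $j$ until they reenter $[r_1,1]$. This step only needs the definitions $f(p_n)=r_n$ and $f(r_{n+1})=r_n$ together with monotonicity of $f$ on $[0,r_1]$. Every point of every excursion therefore lies in $[r_{K-1},1]$, up to endpoint identifications, and these are excluded because $x\notin\mathcal C_f$.

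Combining the two steps, the whole forward $f$-orbit of $x$ is contained in $[\min\{r_{K-1},x,f(x),\dots,f^{N-1}(x)\},1]$, a set bounded away from $0$. Therefore $0\notin\overline{\{f^nx\}}$, and $x\in\Ef$. The main obstacle is the bookkeeping in the excursion step: we must check that a point of $J_i$ spends exactly $i-1$ steps in $(0,r_1)$ and never goes below $r_{i-1}$, in both orientation cases of the second branch. I would handle this by noting that $f$ restricted to $[0,r_1]$ is an increasing homeomorphism onto $[0,1]$ mapping $[r_{j+1},r_j]$ onto $[r_j,r_{j-1}]$, and that the second branch maps $J_i$ onto the interval between $r_{i-1}$ and $r_{i}$ (indices per the definition of $p_n$). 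Endpoint issues are absorbed into $\mathcal C_f\cup\mathcal C_F$.
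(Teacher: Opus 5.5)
Your argument is correct and is essentially the paper's proof. Both proofs treat the first $N$ iterates separately and split the rest of the orbit into $F$-excursions from the points $F^j(h_N(x))$. Both then use that return times at most $K-1$ confine each excursion to $[r_{K-1},1]$; the paper phrases this by contradiction, noting that a point of $[0,r_M)$ needs at least $M$ iterates to reach $[r_1,1]$.

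One indexing slip to fix: $f(J_i)=[r_i,r_{i-1}]$, not an interval bounded by $r_{i-1}$ and $r_{i-2}$. So an excursion from $J_i$ stays above $r_i$ rather than above $r_{i-1}$, and this is exactly what gives your bound $r_{K-1}$ for $i\le K-1$.
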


\begin{proof}
	Let $x\in h_N^{-1}(\EFq\setminus\mathcal{C}_F)$ and set $x'=h_N(x)$ so that $x'\in\EFq\setminus\mathcal{C}_F$. We may assume $x \notin \mathcal{C}_f$ and thus $x \in (r_{N+1},r_N)$. Then there is $K\ge 1$ such that the $F$-orbit of $x'$ avoids the one-sided neighborhood $U_K^\circ$ of $q$. Since $x'\notin\mathcal{C}_F$, the induced itinerary of $x'$ never encounters a cylinder endpoint, so the successive visits of the $f$-orbit of $x$ to the inducing base after time $N$ are exactly the points $F^j(x')$ for $j\geq1$.

	Choose $M>\max\{K,N\}$. We claim that the $f$-orbit of $x$ avoids $[0,r_M)$. If not, let
	\begin{equation*}
		t=\min\{n\ge 0:f^n(x)\in[0,r_M)\}.
	\end{equation*}
	Since $x\in(r_{N+1},r_N)$ and $M>N$, we have $t>N$. Write
	\[
		t=N+m_1+\cdots+m_j+s,
		\qquad
		0\le s<m_{j+1},
	\]
	where \(m_i=\tau(F^{i-1}(x'))\) are the successive return times in the \(F\)-itinerary of \(x'\). Then
	\[
		f^t(x)=f^s(F^j(x')).
	\]
	Since the \(F\)-orbit of \(x'\) avoids \(U_K^\circ\) and \(x'\notin\mathcal{C}_F\), each return time satisfies \(m_i<K<M\). In particular, the point \(f^t(x)\) returns to the inducing base after
	\[
		m_{j+1}-s < M
	\]
	further iterates. But \(f^t(x)\in[0,r_M)\) and $x \notin \mathcal{C}_f$; hence the first entrance of $f^t(x)$ into \([r_1,1]\) cannot occur in fewer than \(M\) iterates. This contradiction proves that the \(f\)-orbit of \(x\) avoids \([0,r_M)\) and thus $x \in \mathcal{E}_f(0)$.
\end{proof}

For $p \in [0,1]$ let
\begin{equation*}
	\mathcal{C}_f^p \coloneqq \bigcup_{j=0}^\infty f^{-j}(p)
\end{equation*}
denote the countable collection of preimages of $p$ under $f$.

\begin{lem}\label{lem:pullback-target}
	For every $N\ge0$ and every $p \in [0,1] \setminus \mathcal{C}_f$,
	\begin{equation*}
		h_N^{-1}\bigl(\mathcal E_F\big(f^{\tau(p)}(p)\big)\setminus\mathcal C_F\bigr)
		\subset \mathcal E_f(p) \cup \mathcal{C}_f^p.
	\end{equation*}
\end{lem}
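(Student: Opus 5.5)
Write $\xi\coloneqq f^{\tau(p)}(p)$ and $m\coloneqq\tau(p)$. The plan is a direct orbit-tracking argument in the spirit of Lemma~\ref{lem:pullback-zero}. We push an accumulation of the $f$-orbit at $p$ forward by the fixed time $m$, which produces an accumulation of the $F$-orbit of $h_N(x)$ at $\xi$.

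Let $x\in h_N^{-1}(\mathcal E_F(\xi)\setminus\mathcal C_F)$ and put $x'=h_N(x)$. We may assume $x\notin\mathcal C_f\cup\mathcal C_f^p$. In particular $x\in(r_{N+1},r_N)$, and no $f$-iterate of $x$ equals $p$ or a branch endpoint. Since $x'\notin\mathcal C_F$, we argue exactly as in the proof of Lemma~\ref{lem:pullback-zero}: the times $n\ge N$ with $f^n(x)\in[r_1,1]$ are precisely the times at which $f^n(x)=F^j(x')$ for some $j\ge0$. This is because $F$ is the first-return map and the itinerary never meets cylinder endpoints. Since $x'\in\mathcal E_F(\xi)$, there is a relative neighborhood $V$ of $\xi$ in $[r_1,1]$ that contains no point $F^j(x')$.

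Next we use $p\notin\mathcal C_f$ to obtain local regularity near $p$. No iterate $f^j(p)$ is $0$, $1$, or $r_1$, since $f(r_1)=0$ puts $r_1$ in $\mathcal C_f$. Consequently:
\begin{itemize}
\item $m<\infty$;
\item for $1\le j<m$ the points $f^j(p)$ lie in the open interval $(0,r_1)$;
\item $\xi\in(r_1,1)$;
\item $f^m$ is continuous on a neighborhood of $p$.
\end{itemize}
Choosing a small neighborhood $W$ of $p$, we get $f^m(W)\subset V\cap(r_1,1)$.

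Now suppose, for contradiction, that $p\in\overline{\{f^n(x)\}}$. Because no iterate of $x$ equals $p$, there are times $n_i\to\infty$ with $f^{n_i}(x)\in W$. Then for $n_i\ge N$ we have $f^{n_i+m}(x)\in V\cap[r_1,1]$, and $n_i+m\ge N$. By the identification of base visits above, $f^{n_i+m}(x)=F^j(x')$ for some $j$. This contradicts the choice of $V$. Hence $x\in\mathcal E_f(p)$.

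The main obstacle is the justification that every visit of the $f$-orbit of $x$ to $[r_1,1]$ after time $N$ is an $F$-iterate of $x'$. This requires that $x$ lies in the open branch $(r_{N+1},r_N)$, so that $f^N(x)=x'$, and that no iterate hits a branch endpoint. Both are handled by excluding $\mathcal C_f$ and $\mathcal C_F$, just as in Lemma~\ref{lem:pullback-zero}.
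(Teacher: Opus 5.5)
Your proposal is correct and follows essentially the same route as the paper: you take a neighborhood $V$ of $f^{\tau(p)}(p)$ missed by the $F$-orbit of $h_N(x)$, pull it back to a neighborhood of $p$ via the locally continuous map $f^{\tau(p)}$, and get a contradiction because every base visit after time $N$ is an $F$-iterate of $h_N(x)$. The only difference in execution is how you secure a late visit: you use times $n_i\to\infty$, while the paper shrinks the neighborhood to miss $x,\dots,f^N(x)$.

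One step needs a short justification. The reduction ``we may assume $x\notin\mathcal C_f$'' is not automatic, since $\mathcal C_f$ is not part of the target set $\mathcal E_f(p)\cup\mathcal C_f^p$. It is harmless for either of two reasons:
\begin{itemize}
\item $h_N(x)\notin\mathcal C_F$ already forces $x\in(r_{N+1},r_N)$ and rules out any iterate hitting a branch endpoint.
\item A point of $\mathcal C_f$ has a finite forward orbit, because $f(0)=0$ and $f(1)\in\{0,1\}$. So $p$ lies in its orbit closure only if $x\in\mathcal C_f^p$.
\end{itemize}
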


\begin{proof}
	Let $x \in h_N^{-1}\bigl(\mathcal E_F\big(f^{\tau(p)}(p)\big)\setminus\mathcal C_F\bigr)$ and set $x'=h_N(x)$ so that $x' \in \mathcal E_F(f^{\tau(p)}(p))\setminus\mathcal C_F$. We may assume $x \notin \mathcal{C}_f^p$. Then the $F$-orbit of $x'$ avoids a neighborhood $V$ of $f^{\tau(p)}(p)$ in $[r_1,1]$.

	Since $p\notin\mathcal C_f$, the branch of $f^{\tau(p)}$ containing $p$ is well defined and $f^{\tau(p)}$ is continuous at $p$. Find a neighborhood $U_0$ of $p$ in $[0,1]$ such that every point of $U_0$ has the same itinerary as $p$ up to time $\tau(p)$ and such that
	\begin{equation*}
		f^{\tau(p)}(U_0)\subset V.
	\end{equation*}
	Since $x\notin\mathcal C_f^p$, no forward iterate of $x$ is equal to $p$. In particular, the finite set
	\begin{equation*}
		\{x,f(x),\ldots,f^N(x)\}
	\end{equation*}
	does not contain $p$. Therefore choose a neighborhood $U\subset U_0$ of $p$ such that
	\begin{equation*}
		U\cap \{x,f(x),\ldots,f^N(x)\}=\varnothing.
	\end{equation*}

	Suppose, toward a contradiction, that the $f$-orbit of $x$ enters $U$. Then there is some $t\geq0$ such that
	\begin{equation*}
		f^t(x)\in U.
	\end{equation*}
	By the choice of $U$, we must have $t>N$. Since $U\subset U_0$, it follows that
	\begin{equation*}
		f^{t+\tau(p)}(x)\in V\subset [r_1,1].
	\end{equation*}
	This is a visit of the $f$-orbit of $x$ to the inducing base after time $N$. Since $x'=f^N(x)\notin\mathcal C_F$, all successive visits of the $f$-orbit of $x$ to the inducing base after time $N$ are exactly the points $F^j(x')$ for $j\geq1$. But this contradicts the fact that $F^j(x')\notin V$ for every $j\geq0$. Therefore the $f$-orbit of $x$ avoids the neighborhood $U$ of $p$ and so $x\in\mathcal E_f(p)$.
\end{proof}

We now prove absolute winning for exceptional sets targeting points in $[0,1]$. The next few results handle targets in the preimage set $\mathcal{C}_f$.

\begin{thm}\label{thm:zero-target-f}
	The set $\Ef$ is absolute winning in $[0,1]$.
\end{thm}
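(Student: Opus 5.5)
We apply the locality principle, Lemma~\ref{lem:locality}, with the closed countable set
\begin{equation*}
	C\coloneqq\{0\}\cup\{r_n\colon n\ge0\}.
\end{equation*}
This set is closed because $r_n\searrow0$, and it is countable. The components of $[0,1]\setminus C$ are the open intervals $(r_{N+1},r_N)$ for $N\ge0$. So it suffices to fix $N\ge0$ and a nontrivial compact interval $I\subset(r_{N+1},r_N)$, and to show that $\Ef\cap I$ is absolute winning in $I$.

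Fix such $N$ and $I$. By Theorem~\ref{thm:Efqwinning}, $\EFq$ is absolute winning in $[r_1,1]$. Since $\mathcal{C}_F$ is countable, the cocountable subset $\EFq\setminus\mathcal{C}_F$ is also absolute winning there. We now pull this set back by $h_N$, which by Lemma~\ref{lem:branch-quasisym} is a quasisymmetric homeomorphism of $[r_{N+1},r_N]$ onto $[r_1,1]$. Quasisymmetric invariance in Theorem~\ref{thm:winningproperties} then shows that
\begin{equation*}
	h_N^{-1}(\EFq\setminus\mathcal{C}_F)
\end{equation*}
is absolute winning in $[r_{N+1},r_N]$. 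Lemma~\ref{lem:pullback-zero} gives the inclusion of this set in $\Ef\cup\mathcal{C}_f$. By monotonicity under supersets, $\Ef\cup\mathcal{C}_f$ is absolute winning in $[r_{N+1},r_N]$. Because $\mathcal{C}_f$ is countable, the final clause of Theorem~\ref{thm:winningproperties} shows that $\Ef$ is absolute winning in $[r_{N+1},r_N]$.

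The main obstacle is restricting from $[r_{N+1},r_N]$ to the compact subinterval $I$. Absolute winning in an interval does not formally descend to a subinterval through the properties listed in Theorem~\ref{thm:winningproperties}, so this step needs its own argument. The cleanest approach is to rerun the quasisymmetric step directly on the subinterval. The image $h_N(I)$ is a compact subinterval of $[r_1,1]$, and the restriction $h_N\restriction_I$ is quasisymmetric onto it. So it suffices to know that $\EFq\cap h_N(I)$ is absolute winning in $h_N(I)$.

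To get this, we use the fact that an absolute winning set $S\subset[r_1,1]$ is absolute winning in every nontrivial compact subinterval $L$. Alice plays the restricted game on $L$ by using her strategy from $[r_1,1]$, treating Bob's intervals inside $L$ as legal moves in the big game. Her responses are legal for the game on $L$ because the size constraints depend only on Bob's current interval. One point needs checking: Bob's first move in the game on $L$ is a legal first move in the game on $[r_1,1]$, and the game rules are otherwise identical. If this subinterval inheritance is already packaged into the standard facts being assumed, the step is immediate. Otherwise I would state it as a one-line remark before applying Lemma~\ref{lem:locality}.

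With this in hand, every $\Ef\cap I$ is absolute winning in $I$, and Lemma~\ref{lem:locality} gives that $\Ef$ is absolute winning in $[0,1]$.
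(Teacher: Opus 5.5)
Your proof is correct and follows the paper's argument: Lemma~\ref{lem:locality} over the components $(r_{N+1},r_N)$, the quasisymmetric pullback of $\EFq\setminus\mathcal{C}_F$ by $h_N$, the inclusion from Lemma~\ref{lem:pullback-zero}, and removal of a countable set. The only bookkeeping difference is that the paper removes $\mathcal{C}_f$ once at the end on $[0,1]$ rather than on each component, which changes nothing.

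Your subinterval-inheritance remark supplies a step the paper leaves implicit when it passes from absolute winning on $[r_{N+1},r_N]$ to the hypothesis of Lemma~\ref{lem:locality}. Since that remark works in any ambient interval, you could apply it directly to $[r_{N+1},r_N]$ and $I$ instead of rerunning the quasisymmetric step on $h_N(I)$.
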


\begin{proof}
	Let
	\begin{equation*}
		\mathcal{R}:=\{0\}\cup\{r_n\}_{n=0}^\infty
	\end{equation*}
	The components of $[0,1]\setminus \mathcal{R}$ are the intervals $(r_{N+1},r_N)$. For each $N\ge0$, Theorem \ref{thm:Efqwinning}, together with Lemma \ref{lem:branch-quasisym}, imply that
	\begin{equation*}
		h_N^{-1}(\EFq\setminus\mathcal{C}_F)
	\end{equation*}
	is absolute winning in $[r_{N+1},r_N]$. By Lemma \ref{lem:pullback-zero}, this set is contained in $S \coloneqq\Ef \cup \mathcal{C}_f$. Hence, $S$ is absolute winning locally in every component of $[0,1]\setminus \mathcal{R}$. Thus $S$ is absolute winning by Lemma \ref{lem:locality} and the result follows.
\end{proof}

\begin{lem}\label{lem:endpoint-one}
	The set $\mathcal E_f(1)$ is absolute winning in $[0,1]$.
\end{lem}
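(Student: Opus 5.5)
The plan is to reduce to the induced target $1\in Y=[r_1,1]$ and pull back exactly as in the proof of Theorem~\ref{thm:zero-target-f}. The point is that $1$ lies in the inducing base, and any visit of the $f$-orbit near $1$ is necessarily a visit to $Y$, hence a point of the $F$-orbit. In both orientation cases, Theorem~\ref{thm:all-single-targets-F} with $\xi=1$ already tells us that $\mathcal E_F(1)$ is absolute winning in $Y$. In the decreasing case this is just Theorem~\ref{thm:Efqwinning}, since $q=1$. So the dynamics of $1$ under $f$ (a repelling fixed point in the increasing case, mapped to $0$ in the decreasing case) will play no role. No case split is needed.

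The key step is a pullback inclusion analogous to Lemmas~\ref{lem:pullback-zero} and~\ref{lem:pullback-target}: for every $N\ge0$,
\begin{equation*}
	h_N^{-1}\bigl(\mathcal E_F(1)\setminus\mathcal C_F\bigr)\subset \mathcal E_f(1)\cup\mathcal C_f .
\end{equation*}
To prove it, take $x$ in the left side with $x\notin\mathcal C_f$, so $x\in(r_{N+1},r_N)$, and put $x'=h_N(x)=f^N(x)$. Choose $\delta>0$ with $\delta<1-r_1$ such that $F^j(x')\notin[1-\delta,1]$ for all $j\ge0$.

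We then locate each iterate $f^n(x)$.
\begin{enumerate}
	\item For $0\le n<N$, we have $f^n(x)\in(r_{N+1-n},r_{N-n})\subset(0,r_1)$, so these iterates are disjoint from $[1-\delta,1]$.
	\item For $n\ge N$, since $x'\notin\mathcal C_F$, the visits of $f^n(x)$ to $Y$ are exactly the points $F^j(x')$, as used in the proof of Lemma~\ref{lem:pullback-zero}. These avoid $[1-\delta,1]$ by the choice of $\delta$.
	\item All remaining iterates lie in $[0,r_1)$, which is disjoint from $[1-\delta,1]$.
\end{enumerate}
Hence the whole $f$-orbit of $x$ misses the relative neighborhood $[1-\delta,1]$ of $1$, and $x\in\mathcal E_f(1)$.

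To conclude, Lemma~\ref{lem:branch-quasisym} and quasisymmetric invariance (Theorem~\ref{thm:winningproperties}) show that $h_N^{-1}(\mathcal E_F(1)\setminus\mathcal C_F)$ is absolute winning in $[r_{N+1},r_N]$. Here $\mathcal E_F(1)\setminus\mathcal C_F$ is cocountable in an absolute winning set, hence absolute winning. Therefore $S\coloneqq\mathcal E_f(1)\cup\mathcal C_f$ is absolute winning on every compact subinterval of each component $(r_{N+1},r_N)$ of $[0,1]\setminus\mathcal R$, where $\mathcal R=\{0\}\cup\{r_n\}_{n\ge0}$ is closed and countable. Lemma~\ref{lem:locality} then gives that $S$ is absolute winning in $[0,1]$, and removing the countable set $\mathcal C_f$ yields the claim.

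The only delicate point is step (2): identifying the post-time-$N$ visits to $Y$ with the $F$-orbit of $x'$. This requires excluding cylinder endpoints and preimages of branch endpoints, which is why we pass to $\mathcal E_F(1)\setminus\mathcal C_F$ and allow the exceptional set $\mathcal C_f$, exactly as in the earlier pullback lemmas.
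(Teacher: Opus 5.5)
Your proposal is correct and follows essentially the same route as the paper. You invoke Theorem~\ref{thm:all-single-targets-F} with $\xi=1$, pull back $\mathcal E_F(1)\setminus\mathcal C_F$ through the quasisymmetric maps $h_N$, prove the inclusion into $\mathcal E_f(1)$ up to a countable set by identifying the post-time-$N$ visits to $[r_1,1]$ with the $F$-orbit of $x'=h_N(x)$, and finish with Lemma~\ref{lem:locality}. The differences are cosmetic: the paper discards $\mathcal C_f^1$ rather than $\mathcal C_f$ and shrinks the neighborhood of $1$ to miss $x,\dots,f^N(x)$, whereas you observe directly that $f^n(x)\in(0,r_1)$ for $n<N$, which is equally valid.
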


\begin{proof}
	By Theorem \ref{thm:all-single-targets-F}, the set $\mathcal E_F(1)\setminus\mathcal C_F$ is absolute winning in $[r_1,1]$. Then, for each $N\ge0$, the set
	\[
		h_N^{-1}\bigl(\mathcal E_F(1)\setminus\mathcal C_F\bigr)
	\]
	is absolute winning in $[r_{N+1},r_N]$ by Lemma \ref{lem:branch-quasisym}.

	We claim that
	\begin{equation*}
		h_N^{-1}\bigl(\mathcal E_F(1)\setminus\mathcal C_F\bigr) \subset \mathcal E_f(1)\cup\mathcal C_f^1.
	\end{equation*}
	Indeed, let $x$ lie in the displayed pullback and suppose $x\notin\mathcal C_f^1$. Put $x'=h_N(x)$. Since $x'\in\mathcal E_F(1)\setminus\mathcal C_F$, the $F$-orbit of $x'$ avoids some neighborhood $V_0$ of $1$ in $[r_1,1]$. Since $x \notin \mathcal{C}_f^1$ we can choose a smaller neighborhood $V \subset V_0$ of $1$ in $[0,1]$ such that
	\[
		V\cap\{x,f(x),\ldots,f^N(x)\}=\emptyset.
	\]
	If the $f$-orbit of $x$ entered $V$ at some time $t \geq 0$, then $t> N$; and since $V\subset[r_1,1]$, this would be a visit to the inducing base after time $N$. Because $x'\notin\mathcal C_F$, such visits are exactly the points of the induced orbit of $x'$. This contradicts the fact that the $F$-orbit of $x'$ avoids $V$. Hence the $f$-orbit of $x$ avoids $V$, so $x\in\mathcal E_f(1)$, proving the claim.

	Thus, $\mathcal E_f(1)\cup\mathcal C_f^1$ is absolute winning locally on each component interval $(r_{N+1},r_N)$. By Lemma~\ref{lem:locality}, applied with
	\[
		C=\{0\}\cup\{r_n\}_{n=0}^\infty,
	\]
	the set \(\mathcal E_f(1)\cup\mathcal C_f^1\) is absolute winning in \([0,1]\) and the result follows.
\end{proof}

\begin{lem}\label{lem:branch-point}
	The set $\mathcal E_f(r_1)$ is absolute winning in $[0,1]$.
\end{lem}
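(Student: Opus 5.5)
Mirroring Lemma~\ref{lem:endpoint-one}, the plan is to pull back an induced result along each branch $h_N$ and then apply Lemma~\ref{lem:locality} with $C=\{0\}\cup\{r_n\}_{n=0}^\infty$. The difference is that $r_1$ is a boundary point of the inducing base but is not cleanly ``seen'' by returns, because it sits on the branch boundary where $f(r_1)=0$ by convention. I will therefore target the set $\mathcal E_F(r_1)\cap\EFq$ in the induced domain. By Theorem~\ref{thm:all-single-targets-F}, Theorem~\ref{thm:Efqwinning}, and countable stability, the set
\[
S'\coloneqq \bigl(\mathcal E_F(r_1)\cap \EFq\bigr)\setminus\mathcal C_F
\]
is absolute winning in $[r_1,1]$. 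By Lemma~\ref{lem:branch-quasisym}, each $h_N^{-1}(S')$ is then absolute winning in $[r_{N+1},r_N]$.

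The key step is to show that
\[
h_N^{-1}(S')\subset\mathcal E_f(r_1)\cup\mathcal C_f^{r_1}\cup\mathcal C_f.
\]
Take $x$ in the pullback, outside $\mathcal C_f$ and $\mathcal C_f^{r_1}$, and put $x'=h_N(x)$. Two facts control the orbit of $x$.
\begin{enumerate}
\item By Lemma~\ref{lem:pullback-zero}, the $f$-orbit of $x$ avoids some neighborhood $[0,r_M)$ of $0$.
\item The $F$-orbit of $x'$ avoids some neighborhood $V_0=[r_1,r_1+\delta)$ of $r_1$ in $[r_1,1]$.
\end{enumerate}
A neighborhood of $r_1$ in $[0,1]$ has two sides: a right side, which lies in the base, and a left side $(r_1-\delta',r_1)$, which lies in $[0,r_1)$.

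\emph{Right side.} Any visit after time $N$ is a visit to the base, hence equals some $F^j(x')$, which is excluded by (2). Finitely many early times are excluded by shrinking the neighborhood, using $x\notin\mathcal C_f^{r_1}$, exactly as in Lemma~\ref{lem:endpoint-one}.

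\emph{Left side.} The left branch $f\restriction_{[0,r_1)}$ is increasing and continuous with $f(r_1^-)=1$. Hence a point $y\in(r_1-\delta',r_1)$ maps to $f(y)$ near $1$, and for small $\delta'$ we get $f(y)\in(p,1]$ with $p$ close to $1$, so $f(y)$ lies in the base. If $t>N$, then $f^{t+1}(x)$ is a visit to the base, hence some $F^j(x')$, and it lies near $1$. Thus I must also exclude returns near $1$. For that reason the target should really be $S'' = \bigl(\mathcal E_F(r_1)\cap\mathcal E_F(1)\cap\EFq\bigr)\setminus\mathcal C_F$, which is still absolute winning by Theorem~\ref{thm:all-single-targets-F}. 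With $\mathcal E_F(1)$ included, the $F$-orbit of $x'$ avoids a neighborhood of $1$, so $f$-visits after time $N$ avoid a left neighborhood of $r_1$ obtained by pulling back through the left branch. Early times are again removed by shrinking the neighborhood. Avoiding $0$ is not strictly needed here, and I would drop $\EFq$ if it turns out to be superfluous.

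\emph{Conclusion.} This gives $h_N^{-1}(S'')\subset \mathcal E_f(r_1)\cup\mathcal C_f^{r_1}\cup\mathcal C_f$. The right-hand side differs from $\mathcal E_f(r_1)$ by a countable set. Lemma~\ref{lem:locality} then yields absolute winning in $[0,1]$, and Theorem~\ref{thm:winningproperties} removes the countable set.

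The main obstacle is the left-side analysis. One must make precise that an orbit point just left of $r_1$ must map into the base near $1$ at the next step, and that this next step is a genuine induced return. The latter requires $x\notin\mathcal C_f$, so that no iterate lands on a branch endpoint and the identification of base visits with $F$-iterates from Lemma~\ref{lem:pullback-zero} applies.
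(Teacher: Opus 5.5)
Your argument is correct, but it takes a different route from the paper. You redo the pullback-and-locality scheme of Lemma~\ref{lem:endpoint-one} with the induced target $\mathcal E_F(r_1)\cap\mathcal E_F(1)$:
\begin{itemize}
\item the right side of $r_1$ lies in the inducing base, so after time $N$ it is controlled directly by $\mathcal E_F(r_1)$;
\item the left side is controlled by pushing forward one step through the increasing left branch, where $f(r_1^-)=1$, and then using $\mathcal E_F(1)$.
\end{itemize}
This works because $x\notin\mathcal C_f$ forces all return times to be finite. Then every visit to $[r_1,1]$ after time $N$ is some $F^j(x')$, and the finitely many earlier times are removed by shrinking the neighborhood.

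The paper never goes back to the induced map at this point. It notes that if $x\notin\mathcal C_f$ and $f^{n_j}(x)\to r_1$, then along a subsequence the approach is from one fixed side. The one-sided limit of $f$ at $r_1$ is $0$ or $1$, so $f^{n_j+1}(x)\to 0$ or $1$. Hence $\bigl(\mathcal E_f(0)\cap\mathcal E_f(1)\bigr)\setminus\mathcal C_f\subset\mathcal E_f(r_1)$, and the lemma follows from Theorem~\ref{thm:zero-target-f} and Lemma~\ref{lem:endpoint-one}.

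\textbf{Comparison.} The paper's route is shorter and reuses results already proved at the level of $f$. It is also the same continuity device that reappears in Theorem~\ref{thm:preimagesofendpoints}. Your route works straight from Theorem~\ref{thm:all-single-targets-F} and needs the one-sided-limit trick only on the left of $r_1$, so it is insensitive to the orientation of the second branch. The price is repeating the identification of base visits with $F$-iterates and the locality argument.

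Two small simplifications. The factor $\mathcal E_F(q)$ in your target is redundant, since $q\in\{r_1,1\}$. Also $\mathcal C_f^{r_1}\subset\mathcal C_f$, because $f(r_1)=0$.
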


\begin{proof}
	We claim that
	\[
		\bigl(\mathcal E_f(0)\cap\mathcal E_f(1)\bigr)\setminus \mathcal C_f
		\subset \mathcal E_f(r_1).
	\]
	Indeed, suppose $x\notin\mathcal E_f(r_1)$ and $x\notin\mathcal C_f$. Then there are integers $n_j\to\infty$ such that
	\[
		f^{n_j}(x)\to r_1.
	\]
	Since $r_1\in\mathcal C_f$, the condition $x\notin\mathcal C_f$ implies that no iterate of $x$ is equal to $r_1$. Passing to a subsequence, we may therefore assume that the points $f^{n_j}(x)$ approach $r_1$ from one fixed side. Along that one-sided branch, the corresponding one-sided limit of $f$ at $r_1$ is an endpoint of $[0,1]$, namely either $0$ or $1$. Therefore
	\[
		f^{n_j+1}(x)\to 0
		\qquad\text{or}\qquad
		f^{n_j+1}(x)\to 1.
	\]
	Thus $x\notin\mathcal E_f(0)$ or $x\notin\mathcal E_f(1)$, proving the claim. The result now follows from Theorem \ref{thm:zero-target-f} and Lemma \ref{lem:endpoint-one}.
\end{proof}

\begin{thm}\label{thm:preimagesofendpoints}
	For every $p\in \mathcal{C}_f$, the set $\mathcal{E}_f(p)$ is absolute winning in $[0,1]$.
\end{thm}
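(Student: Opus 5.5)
Since $\mathcal C_f=\bigcup_{n\ge0}f^{-n}(\{0,1\})$, every $p\in\mathcal C_f$ satisfies $f^n(p)\in\{0,1\}$ for some minimal $n\ge0$. The plan is to show, for such $p$ and such minimal $n$,
\begin{equation*}
	\bigl(\mathcal E_f(0)\cap\mathcal E_f(1)\cap\mathcal E_f(r_1)\bigr)\setminus \mathcal C_f\subset \mathcal E_f(p).
\end{equation*}
Theorem~\ref{thm:zero-target-f}, Lemma~\ref{lem:endpoint-one} and Lemma~\ref{lem:branch-point} make the left-hand side an intersection of three absolute winning sets minus a countable set. By Theorem~\ref{thm:winningproperties} it is absolute winning, and so is its superset $\mathcal E_f(p)$. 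If $p\in\{0,1\}$ there is nothing to prove, so assume $n\ge1$.

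To prove the inclusion, take $x\notin\mathcal C_f$ with $x\notin\mathcal E_f(p)$. Choose times $n_j\to\infty$ with $f^{n_j}(x)\to p$. No iterate of $x$ equals $p$, because $p\in\mathcal C_f$ and $\mathcal C_f$ is backward invariant. Passing to a subsequence, we may assume the points $f^{n_j}(x)$ approach $p$ from one fixed side. We then push this one-sided approach forward along the finite orbit $p,f(p),\dots,f^{n}(p)$, one step at a time, using the one-sided branch extensions of $f$.

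At each step, suppose $f^{n_j+i}(x)$ tends to $f^i(p)$ from one side. There are two cases.
\begin{enumerate}
	\item If $f^i(p)$ is interior to a branch, then $f$ is continuous and monotone there, so $f^{n_j+i+1}(x)$ tends to $f^{i+1}(p)$, again from one side.
	\item If $f^i(p)$ is a branch endpoint, i.e.\ $0$, $r_1$ or $1$, then either $x\notin\mathcal E_f(0)\cup\mathcal E_f(r_1)\cup\mathcal E_f(1)$ and we are done, or the one-sided limit of $f$ is an endpoint of $[0,1]$ and we continue.
\end{enumerate}
Minimality of $n$ means $f^i(p)\notin\{0,1\}$ for $i<n$. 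So the only branch endpoint that can occur before time $n$ is $r_1$, and in that case $x\notin\mathcal E_f(r_1)$ already finishes the argument. Otherwise, after $n$ steps we reach $f^{n_j+n}(x)\to f^n(p)\in\{0,1\}$, so $x\notin\mathcal E_f(0)\cap\mathcal E_f(1)$.

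The main point needing care is the one-sided continuity at branch points, since $f(r_1)=0$ is only a convention. I would handle it by working with orbits of non-$\mathcal C_f$ points, which never hit branch endpoints, and using the one-sided extensions as in Lemma~\ref{lem:branch-point}. In effect this is that lemma's argument iterated finitely many times. Alternatively, one can simply observe that any accumulation of the orbit at $p$ forces accumulation at one of $0$, $r_1$, $1$ by continuity of the one-sided branches of $f^{n}$ near $p$.
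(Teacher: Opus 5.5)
Your proposal is correct and is essentially the paper's argument: both reduce to the inclusion of $\mathcal E_f(0)\cap\mathcal E_f(r_1)\cap\mathcal E_f(1)$ into $\mathcal E_f(p)$ by following the orbit of $p$ until it first meets $\{0,r_1,1\}$, at which point $f^N$ is continuous at $p$ (so your one-sided bookkeeping is unnecessary); the only difference is that you discard the countable set $\mathcal C_f$ to rule out exact hits $f^m(x)=p$, whereas the paper handles those directly via $f^{m+N}(x)=f^N(p)$. One small slip: in case (2) you mean $x\notin\mathcal E_f(0)\cap\mathcal E_f(r_1)\cap\mathcal E_f(1)$ rather than the union, and reaching any of $0,r_1,1$ already ends the argument, so you never need to ``continue.''
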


\begin{proof}
	Let
	\[
		\mathcal A\coloneqq \{0,r_1,1\}.
	\]
	Fix $p\in\mathcal C_f$, and let $N\ge0$ be minimal such that
	\[
		f^N(p)\in\mathcal A.
	\]
	Write
	\[
		a\coloneqq f^N(p).
	\]
	We claim that
	\[
		\mathcal E_f(0)\cap \mathcal E_f(r_1)\cap\mathcal E_f(1)\subset \mathcal E_f(p).
	\]
	If $N=0$, then $p=a\in\mathcal A$, and the claim is immediate. Suppose $N\ge1$. By the minimality of $N$, the finite orbit segment
	\[
		p, f(p), \ldots, f^{N-1}(p)
	\]
	does not meet $\mathcal A$. In particular, it does not meet the branch point $r_1$. Hence there is a neighborhood $U$ of $p$ such that the iterates $f^k$, $0\le k\le N$, follow a single sequence of monotonicity branches on $U$. Therefore $f^N\restriction_U$ is continuous at $p$.

	Now suppose $x\notin\mathcal E_f(p)$. If $f^n(x)=p$ for some $n\ge0$, then
	\[
	f^{n+N}(x)=f^N(p)=a,
	\]
	so $x\notin\mathcal E_f(a)$, and hence
	\[
	x\notin \mathcal E_f(0)\cap \mathcal E_f(r_1)\cap\mathcal E_f(1).
	\]
	Otherwise, $p$ is not itself an iterate of $x$, so there are integers
	$n_j\to\infty$ such that
	\[
	f^{n_j}(x)\to p.
	\]
	For all sufficiently large $j$, we have $f^{n_j}(x)\in U$, and by the continuity of $f^N\restriction_U$ at $p$,
	\[
		f^{n_j+N}(x)=f^N(f^{n_j}(x))\to f^N(p)=a.
	\]
	Thus $x\notin\mathcal E_f(a)$. Since $a\in\{0,r_1,1\}$, it follows that
	\[
		x\notin \mathcal E_f(0)\cap \mathcal E_f(r_1)\cap\mathcal E_f(1).
	\]
	This proves the claim.

	By Theorem \ref{thm:zero-target-f} and Lemmas \ref{lem:endpoint-one} and \ref{lem:branch-point}, the set
	\[
		\mathcal E_f(0)\cap \mathcal E_f(r_1)\cap\mathcal E_f(1)
	\]
	is absolute winning. The result now follows from the claimed inclusion.
\end{proof}

\begin{thm}\label{thm:nonpreimage-targets-f}
	For every $p\in [0,1] \setminus \mathcal{C}_f$, the set $\mathcal E_f(p)$ is absolute winning in $[0,1]$.
\end{thm}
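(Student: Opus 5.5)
The plan is to mirror the proof of Theorem \ref{thm:zero-target-f}, now using Lemma \ref{lem:pullback-target} in place of Lemma \ref{lem:pullback-zero}. Fix $p\in[0,1]\setminus\mathcal C_f$. Since $p\notin\mathcal C_f$, the orbit of $p$ never hits a branch endpoint, so $\tau(p)<\infty$. Put $\xi\coloneqq f^{\tau(p)}(p)\in[r_1,1]$.

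By Theorem \ref{thm:all-single-targets-F}, the set $\mathcal E_F(\xi)$ is absolute winning in $[r_1,1]$. Since $\mathcal C_F$ is countable, Theorem \ref{thm:winningproperties} shows that $\mathcal E_F(\xi)\setminus\mathcal C_F$ is also absolute winning. For each $N\ge0$, the map $h_N\colon[r_{N+1},r_N]\to[r_1,1]$ is a quasisymmetric homeomorphism by Lemma \ref{lem:branch-quasisym}. Quasisymmetric invariance then gives that
\[
h_N^{-1}\bigl(\mathcal E_F(\xi)\setminus\mathcal C_F\bigr)
\]
is absolute winning in $[r_{N+1},r_N]$.

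By Lemma \ref{lem:pullback-target}, this pullback is contained in $S\coloneqq\mathcal E_f(p)\cup\mathcal C_f^p$. Hence $S$ is absolute winning in $[r_{N+1},r_N]$. Since supersets of absolute winning sets are absolute winning, $S$ is also absolute winning in every compact subinterval $I\subset(r_{N+1},r_N)$: Alice restricts her strategy to $I$, exactly as the hypothesis of Lemma \ref{lem:locality} requires. For this step I would spell out a brief justification that absolute winning in a larger interval passes to subintervals, because the paper's use of Lemma \ref{lem:locality} in Theorem \ref{thm:zero-target-f} treats this as implicit. The justification is that a game on $I$ is a legal game on the larger interval started with $B_1\subset I$, and the outcome lies in $I$.

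Now apply Lemma \ref{lem:locality} with the closed countable set $C=\{0\}\cup\{r_n\}_{n\ge0}$, whose complementary components are the intervals $(r_{N+1},r_N)$. This shows that $S$ is absolute winning in $[0,1]$. Finally, $\mathcal C_f^p$ is countable, so Theorem \ref{thm:winningproperties} implies that $\mathcal E_f(p)$ itself is absolute winning.

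The substantive work already sits in Lemma \ref{lem:pullback-target} and Theorem \ref{thm:all-single-targets-F}. The only delicate point I expect is confirming that the target point $\xi$ needed by Lemma \ref{lem:pullback-target} really lies in $Y$, so that Theorem \ref{thm:all-single-targets-F} applies to it. This holds by the definition of $\tau$, given that $\tau(p)<\infty$ for $p\notin\mathcal C_f$.
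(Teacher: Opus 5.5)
Your proposal is correct and follows the paper's proof essentially step for step. You pull $\mathcal E_F(f^{\tau(p)}(p))\setminus\mathcal C_F$ back through the quasisymmetric branches $h_N$, use Lemma \ref{lem:pullback-target} to land in $\mathcal E_f(p)\cup\mathcal C_f^p$, and glue the pieces with Lemma \ref{lem:locality}; the only differences are that you cite the single-target Theorem \ref{thm:all-single-targets-F} instead of Theorem \ref{thm:secondary}, and you spell out the restriction-to-subintervals step that the paper leaves implicit.
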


\begin{proof}
	Fix $p\in[0,1]\setminus\mathcal C_f$. By Theorem \ref{thm:secondary}, the set
	\begin{equation*}
		\mathcal E_F(f^{\tau(p)}(p))\setminus\mathcal C_F
	\end{equation*}
	is also absolute winning in $[r_1,1]$. For each $N\ge0$, Lemma \ref{lem:branch-quasisym} implies that
	\[
		h_N^{-1}\bigl(\mathcal E_F(f^{\tau(p)}(p))\setminus\mathcal C_F\bigr)
	\]
	is absolute winning in $[r_{N+1},r_N]$. Lemma \ref{lem:pullback-target} then gives that
	\begin{equation*}
		S \coloneqq \mathcal{E}_f(p) \cup \mathcal{C}_f^p
	\end{equation*}
	is absolute winning locally on each component interval $(r_{N+1},r_N)$. Applying Lemma \ref{lem:locality} with
	\[
		C=\{0\}\cup\{r_n\}_{n=0}^\infty
	\]
	gives that $S$ is absolute winning in $[0,1]$ and the result follows.
\end{proof}

\begin{thm}\label{thm:all-single-targets-f}
	For every $p\in[0,1]$, the set $\mathcal E_f(p)$ is absolute winning in $[0,1]$.
\end{thm}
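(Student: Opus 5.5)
The statement is a simple case split on whether the target lies in the countable set $\mathcal C_f$ of preimages of the endpoints. Both cases have already been settled, so the plan is to combine Theorem \ref{thm:preimagesofendpoints} and Theorem \ref{thm:nonpreimage-targets-f}.

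Fix $p\in[0,1]$. If $p\in\mathcal C_f$, then Theorem \ref{thm:preimagesofendpoints} gives directly that $\mathcal E_f(p)$ is absolute winning in $[0,1]$. This case covers the targets $0$, $r_1$, $1$ and all their iterated preimages. It relies on Theorem \ref{thm:zero-target-f}, Lemma \ref{lem:endpoint-one} and Lemma \ref{lem:branch-point}, together with the continuity of $f^N$ near $p$ along a fixed branch itinerary.

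If $p\notin\mathcal C_f$, then Theorem \ref{thm:nonpreimage-targets-f} applies. Recall its mechanism: the first-entrance time $\tau(p)$ is finite and $f^{\tau(p)}$ is continuous at $p$. Theorem \ref{thm:secondary} makes $\mathcal E_F(f^{\tau(p)}(p))$ absolute winning in the base. This is pulled back by the quasisymmetric branches $h_N$ (Lemma \ref{lem:branch-quasisym}) and by Lemma \ref{lem:pullback-target}, then globalized with the locality principle Lemma \ref{lem:locality} using the closed countable set $\{0\}\cup\{r_n\}$. The countable error sets $\mathcal C_f^p$ and $\mathcal C_F$ are absorbed by Theorem \ref{thm:winningproperties}.

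Since $[0,1]=\mathcal C_f\sqcup([0,1]\setminus\mathcal C_f)$, the two cases are exhaustive and the theorem follows. There is no real obstacle at this stage. The only point to check is that the two earlier theorems share the same definition of $\mathcal C_f$, and that $\mathcal C_f$ contains every point whose orbit hits a branch endpoint, since those are exactly the points where $\tau$ or continuity could fail. This holds because $f(r_1)=0$, so $r_1$ and all its preimages are already in $\mathcal C_f$. The main theorem, Theorem \ref{thm:main}, would then follow by writing $\mathcal E_f(P)=\bigcap_{p\in P}\mathcal E_f(p)$ and using stability of absolute winning under countable intersections.
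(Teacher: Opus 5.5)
Your proposal is correct and is exactly the paper's argument: split on whether $p\in\mathcal C_f$ and apply Theorem~\ref{thm:preimagesofendpoints} or Theorem~\ref{thm:nonpreimage-targets-f} accordingly. Both theorems use the same $\mathcal C_f$, so the two cases are exhaustive.
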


\begin{proof}
	This follows immediately from Theorems \ref{thm:preimagesofendpoints} and \ref{thm:nonpreimage-targets-f}.
\end{proof}

Our main result, that $\mathcal{E}_f(P)$ is absolute winning whenever $P \subset [0,1]$ is countable, now follows easily from the preceding.

\begin{proof}[Proof of Theorem \ref{thm:main}]
	By Theorem \ref{thm:all-single-targets-f}, each $\mathcal E_f(p)$ with $p\in P$ is absolute winning. Since absolute winning is stable under countable intersections,
	\begin{equation*}
		\mathcal E_f(P)=\bigcap_{p\in P}\mathcal E_f(p)
	\end{equation*}
	is absolute winning.
\end{proof}

\end{document}